\numberwithin{equation}{section}
\newcommand{\set}[1]{\{#1\}}
\newcommand{\cpspec}[2]{\mathrm{CP}(#1,#2)}
\theoremstyle{plain}
\newtheorem{theorem}{Theorem}[section]
\newtheorem{lemma}[theorem]{Lemma}
\theoremstyle{definition}
\newtheorem{defin}[theorem]{Definition}
\theoremstyle{remark}
\newcommand{\bigpset}[2]{\bigl\{#1:#2\bigr\}}
\newcommand{\prob}{\mathbb P}
\newcommand{\pp}[1]{{\mathbb P}[#1]}
\newcommand{\bigpp}[1]{{\mathbb P}\bigl[#1\bigr]}
\newcommand{\law}[1]{{\mathscr L}(#1)}
\newcommand{\biglaw}[1]{{\mathscr L}\bigl(#1\bigr)}
\newcommand{\bigcondlaw}[2]{{\mathscr L}\bigl(#1\bigm|#2\bigr)}
\newcommand{\ee}{{\mathbb E}}
\newcommand{\dtv}[2]{d_{\scriptscriptstyle {\mathrm TV}}(#1,#2)}
\newcommand{\bigdtv}[2]{d_{\scriptscriptstyle {\mathrm TV}}\bigl(#1,#2\bigr)}
\newcommand{\bigdw}[2]{d_{\scriptscriptstyle {\mathrm W}}\bigl(#1,#2\bigr)}
\DeclareMathOperator{\nb}{NB}
\DeclareMathOperator{\po}{Po}
\DeclareMathOperator{\cp}{CP}
\newcommand{\ddist}{\mathrm{P}_{\! \theta}}
\newcommand{\ddens}{p_{\theta}}
\newcommand{\nn}{{\mathbb N}}
\newcommand{\zz}{{\mathbb Z}}
\newcommand{\zzp}{{\mathbb Z}_+}
\newcommand{\rr}{{\mathbb R}}
\newcommand{\bigeuknorm}[1]{\bigl\lvert#1\bigr\rvert}
\newcommand{\Bigeuknorm}[1]{\Bigl\lvert#1\Bigr\rvert}
\newcommand{\biggeuknorm}[1]{\biggl\lvert#1\biggr\rvert}
\newcommand{\supnorm}[1]{\lVert#1\rVert}
\newcommand{\lo}[1]{{\mathrm o}(#1)}
\newcommand{\bigbo}[1]{{\mathrm O}\bigl(#1\bigr)}
\newcommand{\eqa}{\begin{eqnarray}}
\newcommand{\ena}{\end{eqnarray}}
\newcommand{\eqs}{\begin{eqnarray*}}
\newcommand{\ens}{\end{eqnarray*}}
\newcommand{\eq}{\begin{equation}}
\newcommand{\en}{\end{equation}}
\def\Ref#1{(\ref{#1})}
\def\Le{\ \le\ }
\def\Def{\ :=\ }
\def\Blm{\left|}
\def\Brm{\right|}
\def\Bl{\left(}
\def\Br{\right)}
\def\Blb{\left\{}
\def\Brb{\right\}}
\def\nti{n\to\infty}
\def\lnti{\lim_{\nti}}
\def\slri{\sum_{l=1}^{r_i}}
\def\bone{{\bf 1}}
\def\non{\nonumber}
\def\l{\lambda}
\def\a{\alpha}
\def\b{\beta}
\def\g{\gamma}
\def\d{\delta}
\def\D{\Delta}
\def\e{\varepsilon}
\def\h{\eta}
\def\th{\theta}
\def\m{\mu}
\def\r{\rho}
\def\s{\sigma}
\def\t{\tau}
\def\f{\varphi}
\def\ps{\psi}
\def\sn{\sum_{i=1}^n}
\def\san{\sum_{i=a+1}^n}
\def\Eq{\ =\ }
\def\pr{\prob}
\def\Po{\po}
\def\nat{\nn}
\def\sIj{\sum_{i\in I_j}}
\def\un{^{(n)}}
\def\ex{\ee}
\def\ignore#1{}
\def\tS{{\widetilde S}}
\def\ZZ{\zz}
\def\half{{\textstyle \frac12}}
\def\an{{\a}_n}
\def\bal{\beta}
\def\ths{\th_*}
\def\thd{\ths'}
\def\tthnm{{\d}(m,\th)}
\def\siln{\sum_{i=l+1}^n}
\def\by{{\bar y}}
\def\bth{{\bar \th}}
\def\uj{^{(j)}}
\def\ui{^{(i)}}
\def\ski{\sum_{k\ge1}}
\def\sian{\sum_{i=a+1}^n}
\def\Lip{{\mathrm {Lip}}}
\def\CP{\cp}
\def\re{\rr}
\def\qqq{q}
\def\sms{\smallskip}
\def\tqqq{{\tilde\qqq}}
\def\tE{{\widetilde E}}
\def\nx{{\lfloor nx \rfloor}}
\newcommand{\cn}{C^{\scriptscriptstyle (n)}} 
\begin{document}

\title
{Approximation by the Dickman distribution and quasi-logarithmic
combinatorial structures}

\author{A. D. Barbour and Bruno Nietlispach\footnote{Angewandte Mathematik, Universit\"at Z\"urich,  
Winterthurertrasse 190, CH-8057 Z\"URICH;  E-mail {\tt a.d.barbour@math.uzh.ch; 
bruno.nietlispach@math.uzh.ch};
work supported in part by Schweizerischer Nationalfonds Projekt Nr.\  
20--107935/1.}\\
Universit\"at Z\"urich}

\date{}
\maketitle

\begin{abstract}
Quasi-logarithmic combinatorial structures are a class of decomposable
combinatorial structures which extend the logarithmic class considered by 
Arratia, Barbour and Tavar\'{e} \ycite{abt:03}.
In order to obtain asymptotic approximations to their component spectrum, it is necessary first
to establish an approximation to the sum of an associated sequence of independent random variables
in terms of the Dickman distribution. This in turn requires an argument that refines the Mineka coupling by
incorporating a blocking construction, leading to exponentially sharper coupling rates for 
the sums in question.
Applications include distributional limit theorems for the size of the largest component and for
the vector of counts of the small components in a quasi-logarithmic combinatorial structure.
\end{abstract}

\noindent  
{\it Keywords:} Logarithmic combinatorial structures, Dickman's distribution,
  Mineka coupling \\  
{\it AMS subject classification:} 60C05, 60F05, 05A16  \\ 
{\it Running head:}  Quasi-logarithmic structures

\section{Introduction}

Many of the classical random decomposable combinatorial structures, such
as random permutations and random polynomials over a finite field, 
have component structure satisfying
a \emph{conditioning relation}: if~$\cn_i$ denotes the number of 
components of size~$i$, the distribution of the vector of component counts
$(\cn_1, \dotsc, \cn_n)$ of a structure of size~$n$ can be expressed as
\begin{equation}\label{AB-cond-rel}
    \biglaw{\cn_1, \dotsc, \cn_n} = \bigcondlaw{Z_1, \dotsc ,Z_n}{T_{0,n}=n} \, ,
\end{equation}
where $(Z_i,\,i\ge1)$ is a fixed sequence of independent non-negative integer 
valued random variables, and $T_{a,n} := \sum_{i=a+1}^n iZ_i$, $0\le a < n$.  
If, as in the examples above, the~$Z_i$ also satisfy
\begin{equation}\label{AB-logarithmic}
  i\pp{Z_i=1} \ \to\ \theta  
     \qquad \text{and} \qquad i\ee Z_i\ \to\ \th \, ,
\end{equation}
the combinatorial structure is called \emph{logarithmic}.  
It is shown in Arratia, Barbour and Tavar\'{e} \ycite{abt:03} [ABT] that
combinatorial structures satisfying the conditioning relation and
slight strengthenings of the logarithmic condition share many common
properties.  For instance, if~$L\un$ is the size of the largest component,
then $n^{-1}L\un \to_d L$, where~$L$ has probability density function
$f_{\theta}(x)  := e^{\gamma \theta} \Gamma(\theta+1) x^{\theta-2} \ddens((1-x)/x)$, 
$x \in (0,1]$, and~$p_\th$ is the density of the Dickman 
distribution~$P_\th$ with parameter~$\th$, given in Vervaat~(1972, p.~90).
Furthermore, for any sequence $(a_n,\,n\ge1)$ with $a_n = o(n)$, 
\begin{equation*}
   \lim_{n \to \infty} 
      \bigdtv{\law{\cn_1,\dotsc,\cn_{a_n}}}{\law{Z_1,\dotsc,Z_{a_n}}} = 0 \, .
\end{equation*}
Both of these convergence results can be complemented by estimates of the
approximation error, under appropriate conditions.

Knopfmacher~(1979) introduced the notion of additive arithmetic semigroups,
which give rise to decomposable combinatorial structures  satisfying the 
conditioning relation, with negative binomially distributed~$Z_i$.
For these structures, $i\pr[Z_i=1] \sim i\ex Z_i = \th_i$, where
the $\th_i$ do not always converge to a limit as $i\to\infty$.
In those cases in which they do not, they become close
to the integer skeleton of a sum of sine functions with differing frequencies:  
\begin{equation}\label{sinusoidal function}
   \theta'_t := \theta + \sum_{l=1}^L \lambda_l \cos (2 \pi f_l t - \varphi_l) \, , 
      \qquad t \in \rr \, ,
\end{equation}
with  $\sum_{l=1}^L \lambda_l \leq \theta$, and thus exhibit quasi-periodic
behaviour.
It is therefore natural to ask whether the asymptotic behaviour that holds generally for
logarithmic combinatorial structures also holds for such structures, 
which are logarithmic only in an average sense, and, if so, what restrictions
need to be placed on the~$\th_i$'s for this to be true.  

In this paper, we define a family of combinatorial structures, the quasi-logarithmic class,
that include the logarithmic structures as a special case, as well as those of \ocite{zhang:96}.
For such structures, we give conditions under which $n^{-1}L\un \to_d L$ 
(Theorem~\ref{largest component theorem}) and
$\lim_{n \to \infty} \bigdtv{\law{\cn_1,\dotsc,\cn_{a_n}}}{\law{Z_1,\dotsc,Z_{a_n}}} = 0$
(Theorem~\ref{kubilius fundamental theorem}), just as in the logarithmic case.  
A key step in the proofs is to be able to show
that, for sequences $a_n = \lo n$, the normalized sum~$n^{-1}T_{a_n,n}$ converges both
in distribution and locally to the Dickman distribution~$P_\th$ 
(Theorems \ref{global dickman approx theorem} and~\ref{local dickman approx theorem}), and
that the error rates in these approximations can be controlled. To do so,
it is in turn necessary to be able to show that, under suitable conditions, 
\begin{equation}\label{dtv to zero intro}
  \lim_{n \to \infty} \bigdtv{\law{T_{a_n,n}}}{\law{T_{a_n,n} + 1}} \Eq 0 \, , 
      \qquad \text{for all $a_n=\lo{n}$,}
\end{equation} 
and that the error rate can be bounded by a power of~$\{(a_n+1)/n\}$.

A number of the arguments used are adapted to the more general context from those 
presented in 
[ABT].  There, the sum $T_{0,n} := \sn iZ_i$ is close in distribution to that of
$T^*_{0,n} := \sn iZ^*_i$, where $Z^*_i \sim \Po(\th i^{-1})$, and the latter sum
has a compound Poisson distribution~$\CP(\th,n)$ whose properties are tractable.
In the current situation, with the $\th_i$'s not all asymptotically equal, it is first
necessary to show that~$\CP(\th,n)$ is still a good approximation to the sum~$T_{0,n}$.
This is by no means obviously the case.  The intuition is nonetheless that, if the
distributions of $T_{0,n}$ and~$T_{0,n}+1$ are not too different, then having
$\th_i = 2\th$ and $\th_{i+1} = 0$ instead of $\th_i = \th_{i+1} = \th$ 
should leave the distribution~$\law{T_{0,n}}$ more or less unchanged; only the
average behaviour of the~$\th_i$ should be important.  Thus we first want to
establish~\Ref{dtv to zero intro}.  Once we have done so, we are able to show, 
by way of Stein's method, that $\law{T_{0,n}}$ is indeed close to~$\CP(\th,n)$

Proving that~\Ref{dtv to zero intro} holds under conditions appropriate for
our quasi-logarithmic structures turns out in itself to be an interesting problem.
The standard Mineka coupling, used to bound the total variation distance 
between a sum of independent, integer valued random variables and its unit
translate, gives a very poor approximation in this context.
To overcome the difficulty, we introduce a new coupling strategy, which
yields a much more precise statement  in a rather general setting
(Theorem~\ref{AB-dss1}).  This is the substance of the next section.
We then show that the distributions $\law{T_{0,n}}$ and~$\CP(\th,n)$ are
close in Section~\ref{dickman}, and conclude that quasi-logarithmic combinatorial
structures behave like logarithmic structures in Section~\ref{sec-4}.

As observed by Manstavi\v cius~(2009), when considering only the small
components, the distances
$\bigdtv{\law{\cn_1,\dotsc,\cn_{a_n}}}{\law{Z_1,\dotsc,Z_{a_n}}}$
can be bounded, even without assuming that the $\th_j$'s converge on average
to any fixed~$\th$, as long as they are bounded and bounded away from~$0$  (we
do not require the latter condition).  He considers only the case of Poisson
distributed~$Z_i$, for which, inspecting the proof of 
Theorem~\ref{kubilius fundamental theorem},
it is enough to obtain an estimate of the form
$$
  n|\pr[T_{a_n,n} = n-k] - \pr[T_{a_n,n} = n-l]| \Le C\{|k-l|/n\}^\g,\qquad     
      0 \le k,l \le n/2,
$$
for some $\g > 0$.  This he achieves by using his refined characteristic
function arguments.  Since we are also interested in approximating the
distribution of the largest components, for which some form of convergence
to a~$\th$ seems necessary, we do not attempt this refinement.
  

\section{An alternative to the Mineka coupling}\label{coupling subsection}

Let $\set{X_i}_{i \in \nn}$ be mutually independent $\zz$-valued random variables, and let
$ S_n := \sn X_i$. 
The Mineka coupling, developed independently by \ocite{mineka:73} and \citeauthor{roesler:76}~(1977)
(see also
\ocite{lindvall:92}*{Section II.14}) yields a bound of the form
\begin{equation}\label{barbour xia bound}
  \bigdtv{\law{S_n}}{\law{S_n + 1}} \Le \Bigl(\frac\pi2 \sum\nolimits_{i=1}^{n} u_i \Bigr)^{-1/2} \, ,
\end{equation}
where
\begin{equation*}
   u_i \Def  \Bigl( 1 - \bigdtv{\law{X_i}}{\law{X_i+1}}\Bigr) \, ;
\end{equation*}
see Mattner \& Roos~(2007, Corollary~1.6). The proof is based on coupling copies
 $\{X_i'\}_{i \in \nn}$ and $\{X_i''\}_{i \in \nn}$ of $\{X_i\}_{i \in \nn}$ 
in such a way that
\begin{equation*}
  V_n \Def \sum_{i=1}^n \bigl(X_i - X_i'\bigr)\,, \qquad n \in \nn,
\end{equation*}
is a symmetric random walk with steps in $\{-1,0,1\}$; the coupling inequality
\cite{lindvall:92}*{Section\ I.2} then shows that
\begin{equation*}
   \bigdtv{\law{S_n}}{\law{S_n + 1}} \Le \pp{\tau > n} \Eq \pp{V_n \in \{-1,0\}} \, ,
\end{equation*}
where $\tau$ is the time at which $\{V_n\}_{n \in \zzp}$ first hits level $1$,
the last equality following from the reflection principle.
However, this inequality gives slow convergence rates, if $X_i=i Z_i$ and the $Z_i$ 
are as described in the Introduction;  typically, $\bigdtv{\law{iZ_i}}{\law{iZ_i+1}}$
is extremely close to~$1$,
and, if $X_i$ is taken instead to be $(2i-1)Z_{{2i-1}} + 2iZ_{2i}$, we still expect to have
$1 - \bigdtv{\law{X_i}}{\law{X_i+1}} \asymp i^{-1}$, leading to bounds of the form
\begin{equation*}
   \bigdtv{\law{S_n}}{\law{S_n+1}} = \bigbo{(\log n)^{-1/2}} \, .
\end{equation*}
In this section, by modifying the Mineka approach in the spirit of 
Rogers~(1999) to allow the random
walk~$V$ to make larger jumps, we show that error bounds of order~$n^{-\g}$ for some $\g>0$ can
be achieved, representing an exponential improvement over the Mineka bounds.

Let $(X_i,\,i\ge1)$ be independent $\zzp$-valued random variables, set $S_{a,n} := \san X_i$,
and define
\[
   \qqq(i,d) \Def \min\{\pr[X_i=0]\,\pr[X_{i+d}=i+d], \pr[X_i=i]\,\pr[X_{i+d}=0]\},
      \qquad i,d\in\nat.
\]
Then it is possible to couple copies $(X'_i,X'_{i+d})$ and  $(X''_i,X''_{i+d})$ of  $(X_i,X_{i+d})$
for any $i,d$ in such a way that
\eqa
   &&\pr[(X'_i,X'_{i+d}) = (0,i+d), (X''_i,X''_{i+d}) = (i,0)] \non\\
    &&\quad\Eq \pr[(X'_i,X'_{i+d}) = (0,i+d), (X''_i,X''_{i+d}) = (i,0)] \Eq \qqq(i,d);\non \\
   &&\pr[(X'_i,X'_{i+d}) = (X''_i,X''_{i+d})] \Eq 1 - 2\qqq(i,d).\label{AB-pair-coupling}
\ena
Note that then
\eq\label{AB-differences}
   (X'_i + X'_{i+d}) - (X''_i + X''_{i+d}) \Eq \left\{
       \begin{array}{rc}
         d &\qquad\mbox{with probability}\ \qqq(i,d);\\
         0 &\qquad\mbox{with probability}\ 1 - 2\qqq(i,d);\\ 
         -d &\qquad\mbox{with probability}\ \qqq(i,d),
       \end{array} \right.
\en
so that sums of such differences, with non-overlapping indices, can be constructed
so as to perform a symmetric random walk on~$d\zz$.
By successively coupling pairs in this way, and by using different values of~$d$, it may thus
be possible to couple the sums $S'_{a,n} := 1 + \san X'_i$ and~$S''_{a,n} := \san X''_i$ quickly, 
even when many of the overlaps
$\qqq(i,d)$ are zero.  The following theorem is typical of what can be achieved.

For $d \in \nat$ and $\ps > 0$, define $E(d,\ps) := \{i\colon\, \qqq(i,d) \ge \ps/(i+d)\}$.
For $D$ a finite subset of~$\nat$, suppose that there are $k \in \nat$ and $\ps>0$
such that
\eq\label{AB-pair-assn}
   E(d,\ps) \cap \{jk+1,\ldots,(j+1)k\} \ \ne \ \emptyset,
        \quad\mbox{for all}\ d\in D\,, \ j\ge1.
\en
In particular, if $X_i = iZ_i$ with $Z_i \sim \Po(\th_i/i)$, and if
$0 < \th_- \le \th_i$ for all~$i$, then clearly
$\qqq(i,d) \ge \th_-/(i+d)$ for all~$i$, and so \Ref{AB-pair-assn} holds for any~$D$ with
$k=1$ and $\ps = \th_-$.  However, \Ref{AB-pair-assn} also holds for any~$D$ if, for instance,
$0 < \th_- \le \th_i$ is only given for $i \in 3\nat \cup \{7\nat+2\}$,
now with $k=16 + \max\{d\colon\,d\in D\}$ and $\ps = \th_-$.

\begin{theorem}\label{AB-dss1}
   Let $r,s \in \nat$ be co-prime, and  set 
\eq\label{AB-ddef}
    D \Def \{r\}\cup\{s2^g,\,g\ge0\} .
\en
Suppose that, for some $k,\ps$, \Ref{AB-pair-assn} is satisfied with $D $
as above. 
Then there exist $C,\g > 0$, depending on $r,s,k$ and~$\ps$, such that
\[
    \dtv{\law{S_{a,n}}}{\law{S_{a,n}+1}} \Le 6\{(a+1)/n\}^{\g},
\]
for all~$0 \le a < n$ for which $a+1 \le Cn$.
\end{theorem}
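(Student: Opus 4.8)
The plan is to deduce the bound from the coupling inequality, reducing it to the probability that a carefully engineered symmetric random walk fails to reach the level $-1$ while running over the indices $\{a+1,\dotsc,n\}$. First I would build coupled families $(X'_i)$ and $(X''_i)$, each distributed as $(X_i)$, put $S'_{a,n}:=1+\san X'_i$ and $S''_{a,n}:=\san X''_i$ (so that $S'_{a,n}$ is distributed as $S_{a,n}+1$ and $S''_{a,n}$ as $S_{a,n}$), and observe that $\dtv{\law{S_{a,n}}}{\law{S_{a,n}+1}}\le\pr[S'_{a,n}\ne S''_{a,n}]$. Writing $W_j:=\sum_{i=a+1}^{j}(X'_i-X''_i)$, so that $S'_{a,n}-S''_{a,n}=1+W_n$, the coupling succeeds as soon as $W$ hits $-1$, after which the remaining coordinates are coupled identically and $W$ is frozen there. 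By the pair coupling \Ref{AB-pair-coupling}--\Ref{AB-differences} I may choose disjoint pairs $(i,i+d)$ with $d\in D$ and $i\in E(d,\psi)$ and thereby make $W$ a symmetric random walk with increments in $\{0\}\cup\bigcup_{d\in D}\{-d,d\}$; so the task becomes to arrange these pairs so that $W$ reaches $-1$ with probability at least $1-6\{(a+1)/n\}^{\g}$.

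The walk is built from two devices. Since $\gcd(r,s)=1$, a bounded number of $\pm r$ increments can move $W$ into the residue class of $-1$ modulo $s$ with probability bounded below; and once $W\equiv-1\pmod s$ the dyadic family $\{s2^{g}\}$ permits a \emph{binary descent}, cancelling the binary digits of $(W+1)/s$ one scale at a time using $\pm s2^{g}$ steps, which brings $W$ exactly to $-1$. I would package an $r$-phase followed by a descent through the relevant finitely many dyadic scales into a single \emph{manoeuvre}: carried out at an index location of order $m$, it uses a batch of order $m$ consecutive coupling opportunities (which by \Ref{AB-pair-assn} always contains enough good indices for each needed $d$, since every block of $k$ integers contributes one), performs a symmetric sub-walk whose number of non-zero increments has bounded mean, and --- starting from a bounded current position --- drives $W$ to $-1$ with probability at least a constant $c_0=c_0(r,s,k,\psi)>0$.

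Next I would tile $\{a+1,\dotsc,n\}$ by dyadically growing windows, the $t$-th consisting of the integers in $(2^{t-1}(a+1),2^{t}(a+1)]$, for $t=1,\dotsc,T$ with $T$ of order $\log_2\{n/(a+1)\}$. Because the $i$-th coupling opportunity produces a non-zero increment with probability of order $\psi/i$, each window carries a bounded budget and so accommodates a fixed positive number of manoeuvres, i.e. a fixed number of essentially independent attempts to drive $W$ to $-1$, each succeeding with probability at least $c_0$ (if the constant is below $1$ one merges a bounded number of consecutive windows). Multiplying failure probabilities over these attempts gives
\[
   \pr[\,W\ \text{never reaches}\ -1\ \text{on}\ \{a+1,\dotsc,n\}\,]\ \le\ (1-c_0)^{c_1 T}\ \le\ \Bigl(\frac{a+1}{n}\Bigr)^{\g}
\]
for a suitable $\g=\g(r,s,k,\psi)>0$, and the coupling inequality then yields the theorem; the factor $6$ absorbs the rounding in $T$, boundary effects at the two ends of $[a+1,n]$, and the small-$n$ (equivalently, $a+1$ close to $Cn$) regime in which $\{(a+1)/n\}^{\g}$ is bounded away from $0$ and the assertion is vacuous.

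I expect the real difficulty --- the place where the blocking refinement genuinely does the work --- to be justifying that the successive manoeuvres behave as essentially independent constant-probability attempts: a failed manoeuvre must not leave $W$ so far from $0$ that the next one, whose reach is of bounded order, cannot recover it, and this must hold at all $\asymp\log_2\{n/(a+1)\}$ scales at once. Controlling these excursions --- using the $\pm r$ increments to repair the residue and the geometric family of jump sizes both for the descent and to re-centre $W$ --- is the delicate part; the remaining work is the routine combinatorics of selecting disjoint pairs with indices in the prescribed blocks, together with standard estimates for the increment probabilities of the resulting walk.
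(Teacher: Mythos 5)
Your coupling construction is the same as the paper's: pairwise couplings as in \Ref{AB-pair-coupling} with jump sizes drawn from $D$, an initial phase of $\pm r$ jumps to bring the discrepancy into $s\zz$ (possible since $\gcd(r,s)=1$), and then jumps of size $s2^g$ taken at the lowest set binary digit of the discrepancy divided by $s$, so that each such jump, whatever its sign, strictly increases the $2$-adic valuation until the discrepancy has the form $\pm s2^l$, after which it is doubled or annihilated with equal probability. That is exactly the mechanism of the paper's proof. Where you differ is in the probabilistic bookkeeping: you tile $\{a+1,\dots,n\}$ into dyadic windows and claim that each window (or a bounded union of windows) supports a ``manoeuvre'' succeeding with probability at least a constant $c_0$, so that multiplying failure probabilities over $\asymp\log_2\{n/(a+1)\}$ windows yields the power of $(a+1)/n$.

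The gap is that the one claim carrying all the weight in this reorganization --- that the manoeuvres behave as essentially independent attempts whose conditional success probability, given any state left by earlier failures, is uniformly bounded below --- is precisely what you do not prove; you flag it yourself as ``the delicate part'' and offer only the heuristic that the $\pm r$ steps and the geometric jump family can ``re-centre'' the walk. As stated it is not obvious: a failed manoeuvre can leave the discrepancy enlarged (the $r$-phase wanders, and each dyadic jump can double the value), and if that damage compounded across windows, a window with bounded jump budget could no longer finish the job and the uniform $c_0$ would fail. What rescues your scheme is a monotonicity you never state: during the descent the $2$-adic valuation of the discrepancy never decreases and its odd part never grows, and the expected number of $\pm r$ jumps needed to hit $s\zz$ is at most $s^2/4$ uniformly in the starting residue, so the residual work after an interruption is stochastically no larger than before; one must also check that the required jump sizes remain admissible, i.e.\ that pairs $(i,i+s2^g)$ with $i+s2^g\le n$ are available once the value has doubled up to order $\sqrt n$ (the paper caps the exponent at $l_n=\lfloor\frac12\log_2 n\rfloor$, keeps those pairs below $n-s\lfloor\sqrt n\rfloor$, and chooses the scales in a non-anticipating way). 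The paper avoids the restart issue altogether: it runs a single global attempt, stochastically bounds the number of jumps \emph{needed} by geometric-type variables ($N_1\le\frac12 s^2G_1$ for the $r$-phase, $N_2\le\log_2 r+G_2$ for the descent), and uses Chernoff bounds --- a martingale version for the adaptively chosen dyadic pairs --- to show that the construction supplies of order $\log\{n/(a+1)\}$ jump opportunities in the two index ranges, the four error probabilities summing to $6\{(a+1)/n\}^{\g}$. Either your uniformity argument must be supplied or the accounting must be done globally as in the paper; without one of these, the multiplicative bound $(1-c_0)^{c_1T}$ is unjustified.
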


\begin{proof}
We take $\tS'_0 = 1$, $\tS''_0=0$, and then successively define $\tS'_j := \tS'_0 + \sIj X'_i$,
$\tS''_j := S''_0 + \sIj X''_i$, $T_j := \tS'_j-\tS''_j$, $j\ge1$.  Here, the sequences
$(X'_i,\,1\le i\le n)$ and $(X''_i,\,1\le i\le n)$ are two copies of the
sequence $(X_i,\,1\le i\le n)$ of independent random variables, constructed
by successively coupling pairs $(X'_{i_j},X'_{i_j+d_j})$
and $(X''_{i_j},X''_{i_j+d_j})$, for suitable $i_j$ and~$d_j$,
realized independently of the random variables $(X'_i,X''_i,\,i\in I_{j-1})$,
where $I_{j-1} := \cup_{l=1}^{j-1}\{i_l,i_l+d_l\}$. This coupling of pairs typically
omits some indices~$i \in \{1,2,\ldots,n\}$; for such~$i$, we set $X'_i = X''_i$, chosen 
independently from $\law{X_i}$.  The coupling of the pairs
$(X'_i,X'_{i+d})$ and $(X''_i,X''_{i+d})$ is accomplished by arranging that
$\law{(X'_i,X'_{i+d})} = \law{(X''_i,X''_{i+d})} = \law{X_i}\times\law{X_{i+d}}$
and that $(X'_i + X'_{i+d}) - (X''_i + X''_{i+d}) \in \{-d,0,d\}$, as
described in~\Ref{AB-pair-coupling}.
The indices are defined by taking $i_1=\min\{i > a\colon\,i \in E(r,\ps)\}$, and then
taking $i_{j+1} := \min\{i > i_j\colon\, i\in E(d_{j+1},\ps),\,i,i+d_{j+1} \notin I_j\}$, where
\eqa
  d_l \Def \left\{ 
    \begin{array}{ll}
       r, &\qquad \mbox{if }T_{l-1} \notin s\ZZ;\\
       s2^{f_2(T_{l-1}/s)}, &\qquad\mbox{if } l > \t,\,T_{l-1}\ne0;\\
       0, &\qquad\mbox{if } T_{l-1} = 0,
    \end{array} \right.
\ena
and where $f_2(t)$ is the exponent of~$2$ in the prime factorization of~$|t|$,
$t\in\ZZ$.  If $T_{l-1}=0$, we couple $X'_{i_l} = X''_{i_l}$, and thus
$X'_{i_{l'}} = X''_{i_{l'}}$ for all $l'\ge l$, with $i_{l'}$ running through
all $i > i_{l-1}$ such that $i \notin I_{l-1}$.

With this construction, the sequence~$T_j$ can only
change in jumps of size~$\pm r$ until it first reaches~$s\ZZ$. Thereafter, at any jump, 
the exponent $f_2(T_j/s)$ increases by~$1$ until $T_j$ is of the form~$\pm s2^l$
for some~$l$; after this, the value of~$T_j$ is either doubled or set to zero 
at each jump, in the latter case remaining in zero for ever. 
If $i_J + d_J \le n$, where $J := \inf\{j\colon\, T_j=0\}$, then
\[
    S'_{a,n} \Def 1 + \san X'_i \Eq \san X''_i \ =:\ S''_{a,n}, 
\]
and $\law{S'_{a,n}} = \law{S_{a,n}+1}$, $\law{S''_{a,n}} = \law{S_{a,n}}$, so that,
from the coupling inequality  \cite{lindvall:92}*{Section\ I.2}, 
\eq\label{AB-coupling-bnd}
  \dtv{\law{S_{a,n}}}{\law{S_{a,n}+1}} \Le \pr[i_J + d_J > n].
\en
We thus wish to bound this probability.

Now the process~$T$, considered only at its jump times, has the law of a simple random
walk of step length~$r$ starting in~$1$, until it first hits a multiple of~$s$,
and the mean number of steps to do so is at most~$s^2/4$. Thus, and by the
Markov property of the simple random walk, the number of jumps~$N_1$ until a multiple
of~$s$ is hit is bounded in distribution by $\half s^2 G_1$, where~$\pr[G_1 > j]
= 2^{-j}$, $j\ge1$; in particular, for any~$\g> 0$,
\[
   \pr[N_1 > \half s^2\g \log_2 (1/\an)] \Le 2\an^{\g},
\]
where $\an := (a+1)/n$.
The remaining number~$N_2$ of jumps required for~$T$ to reach~$0$
is then at most~$\log_2r$ (in order to reach the form $\pm s2^l$ for some~$l$),
together with an independent random number~$G_2$ of steps until~$0$ is reached, 
having the same distribution as~$G_1$; hence,
\[
   \pr[N_2 > 2\g\log_2 (1/\an)] \Le 2\an^{\g}
\]
also, if $n \ge (a+1)r^{1/\g}$.  It remains to show that the process~$T$ has the
opportunity to make this many jumps, with high probability, for suitable choice
of~$\g$.

Now, in view of~\Ref{AB-pair-assn}, every block of indices $\{jk+1,\ldots,(j+1)k\}$
contains at least one~$i \in E(r,\ps)$.  Hence, for any $1/2 \le \bal < 1$,  we
can choose a set~$S_1$ of non-overlapping pairs $(i_l,i_l+r)$, $1\le l\le L$, such that 
$i_l \in E(r,\ps)$ and $a+1\le i_l \le k(a+1)\an^{-\bal} - r$ for each~$l$, and such that 
\[
   \sum_{l=1}^L \frac1{i_l+r} \ >\ 
    \frac{1}{2(k\vee r)}\sum_{i=2}^{\lfloor (a+1)\an^{-\bal} \rfloor} \frac1{i + a/(k\vee r)}
    \ \ge\ \frac{\bal}{4(k\vee r)}\,\log (1/\an),
\]
if $n \ge 3^{2/\bal}(a+1)$.
The first factor~$2$ in the denominator is present because a pair $(i,i+r)$ with~$i\in E(r,\ps)$ 
can be excluded from~$S_1$, but only if~$i = i_l+r$ for some pair $(i_l,i_l+r)$
already in~$S_1$; the other is to yield an inequality, rather than an asymptotic equality.
In similar fashion, for any non-decreasing sequence $(\r_l,\,l\ge1)$, 
we can choose a set~$S_2$ of non-overlapping pairs $(i'_l,i'_l+s2^{\r_l\wedge l_n})$, 
$1\le l\le L'$, where $l_n := \lfloor\frac12\log_2n\rfloor$,
such that $k(a+1)\an^{-\bal} < i'_l \le n - s\lfloor \sqrt n \rfloor$ for each~$l$, 
and such that 
\[
   \sum_{l=1}^{L'} \frac1{i'_l+s2^{\r_l\wedge l_n}} \ >\ 
     \frac1{2k}\sum_{i=\lceil 2(a+1)\an^{-\bal} \rceil + 1}^{\lfloor n/k \rfloor} i^{-1}
    \ \ge\ \frac{1-\bal}{4k}\,\log (1/\an),
\]
if also $n \ge (a+1)(4k)^{2/(1-\bal)}$.

We now show that, for suitable choices of $\g$ and~$\bal$, the pairs in~$S_1$ with high probability
yield $M_1 \ge \half s^2\g \log_2 n$ jumps of~$T$.  We then show that those in~$S_2$, with the 
sequence~$\r_l$ chosen in non-anticipating fashion such that $\r_1$ is the
exponent of~$2$ in~$T_l$ at the first~$l$ at which $T_l \in s\zz$,
$\r_{l+1} = \r_l$ if $T_{l} = T_{l-1} \neq 0$, $\r_{l+1} = (\r_l+1)\wedge l_n$ 
if $0 < T_l \neq T_{l-1}$ and $\r_{l+1} =  l_n$ otherwise, yield $M_2 \ge 2\g\log_2 n$.
Indeed, by the Chernoff inequalities (Chung \& Lu 2006, Theorem~3.1), if $\f_1$, 
$0 < \f_1 < 1$, is such that
\eq\label{AB-f1-def}
   \half s^2\g \log_2 (1/\an) \Eq \frac{\bal\ps(1-\f_1)}{4(k\vee r)}\,\log (1/\an),
\en
then
\[
    \pr[M_1 < \half s^2\g \log_2 (1/\an)] \Le \exp\{-3\f_1^2\bal\ps\log (1/\an)/32(k\vee r)\} 
         \Le \an^{\g},
\]
if $3\f_1^2 s^2 / \{16(1-\f_1) \log 2\} \ge 1$.
Similarly, using a martingale analogue of the Chernoff inequalities 
(Chung \& Lu 2006, Theorem~6.1),
for~$f_2$ such that $3\f_2^2/\{4(1-\f_2)\log 2\} \ge 1$ and with
\eq\label{AB-f2-def}
   2\g \log_2 (1/\an) \Eq \frac{(1-\bal)\ps(1-\f_2)}{4k}\,\log(1/\an),
\en
we get
\[
    \pr[M_2 <  2\g \log_2 (1/\an)] \Le \exp\{-3\f_2^2(1-\bal)\ps\log (1/\an)/32k\} \Le \an^{\g}.
\]
Finally, for such choices of $f_1$ and~$f_2$, equations \Ref{AB-f1-def} and~\Ref{AB-f2-def}
can be satisfed with the same choice of~$\bal$ if~$\g$ is chosen such that
\[
   1 \Eq \frac {2\g}{\ps\log 2}\Blb \frac{(k\vee r)s^2}{1-\f_1} + \frac{4k}{1-\f_2} \Brb;
\]
then
\[
    \bal \Eq \frac {2\g(k\vee r)s^2}{(1-\f_1){\ps\log 2}}.
\]
Choosing $\f_2$ to satisfy $3\f_2^2/\{4(1-\f_2)\log 2\} = 1$, and then~$\f_1$ larger
than its minimum value, if necessary, to ensure that $\bal \ge 1/2$,
this yields the theorem.
\end{proof}

Clearly, the exponent~$\g$ could be sharpened; the condition~\Ref{AB-pair-assn} 
could also be weakened
to one ensuring a positive density of indices in each~$E(d,\ps)$ over longer intervals.
The set~$D$ could also be constructed in other ways.  One natural extension would be
to replace~$r$ co-prime to~$s$ with any $r_1,\ldots,r_m$ satisfying $\gcd\{r_i,\ldots,r_m,s\}
= 1$.

The coupling used to establish Theorem~\ref{AB-dss1} is not the only possibility.
In the example of additive arithmetic semigroups, there is one case in which the set~$D$
can be taken to consist of the integers $\{2^{g+1},\,g\ge0\}$, but no odd integers.
Here, the jumps in the process~$T$ would always be even, and hence, since~$T_0=1$,
$T$ can never hit~$0$.  However, if we define
\eqs
  \tqqq(i) &:=& \min\{\pr[X_i=0],\pr[X_i=i]\};\quad
  \tE(1,\ps) \Def \{i \in 2\zz+1\colon\, \tqqq(i) \ge \ps /i\},
\ens
and if, for all~$j\ge1$, 
\eq\label{AB-single-assn}
   \tE(1,\ps) \cap \{jk+1,\ldots,(j+1)k\} \ \ne \ \emptyset,
\en
then one can begin the coupling construction by defining $X_i'= X_i''$ for even~$i$
and coupling $X_i'$ and $X_i''$ for odd~$i$ in such a way that
\[
   \pr[X'_i = i, X''_i = 0] \Eq \pr[X'_i = 0, X''_i = i] \Eq 1 - \pr[X'_i = 0, X''_i = 0]
   \Eq \tqqq(i),
\]
until the first time~$i$ that $X'_i \neq X''_i$, at which time the difference~$T_i$ is
even, taking either the value $i+1$ or $i-1$.  Thereafter, the coupling is concluded 
using jumps of sizes~$2^{g+1}$, with the second half of the strategy in the previous 
proof.  Now the number of steps required to complete the coupling depends on how big the
first even value of~$T$ happens to be, but Chernoff bounds are still sufficient to
be able to conclude the following theorem, which we state without proof.

\begin{theorem}\label{AB-extra}
  Suppose that, for some $k,\ps$, \Ref{AB-pair-assn} is satisfied 
with $D = \{2^{g+1},\,g\ge0\}$, and~\Ref{AB-single-assn} is also satisfied.
Then there exist $C,\g > 0$, depending on $k$ and~$\ps$, such that
\[
    \dtv{\law{S_{a,n}}}{\law{S_{a,n}+1}} \Le C\{(a+1)/n\}^{\g},
\]
for all~$0 \le a < n$.
\end{theorem}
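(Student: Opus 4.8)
The plan is to couple copies $(X_i')$ and $(X_i'')$ of $(X_i)$ so that, with $S_{a,n}' := 1 + \san X_i'$ and $S_{a,n}'' := \san X_i''$ — so $\law{S_{a,n}'}=\law{S_{a,n}+1}$, $\law{S_{a,n}''}=\law{S_{a,n}}$ — the difference process $T$, which starts at $1$, reaches $0$ inside $\{1,\ldots,n\}$ with probability $1-\bo{\an^{\g}}$, where $\an:=(a+1)/n$; the coupling inequality then yields the theorem. I would build the coupling in the two stages indicated before the statement. In \emph{Stage~1} the indices are processed in order, with $X_i'=X_i''=0$ at even $i$ and, for odd $i$, $X_i'$ and $X_i''$ coupled so that $(X_i',X_i'')$ is $(i,0)$ or $(0,i)$ with probability $\tqqq(i)$ each and $(0,0)$ otherwise, stopping at the first (necessarily odd) index $i^*$ with $X_{i^*}'\ne X_{i^*}''$ and thereafter putting $X_i'=X_i''$ for all odd $i$; thus $T$ rests at $1$ and then jumps once, to the even value $1\pm i^*$. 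In \emph{Stage~2} the indices of $(i^*,n]$ are coupled in disjoint pairs $(i,i+d)$, $d=2^{g+1}$, as in~\Ref{AB-pair-coupling}, each pair moving $T$ by $\pm d$ with probability $\qqq(i,d)$; taking $d=2^{f_2(T)}$ at every jump, $f_2(T)$ rises at each jump until $T$ has the form $\pm2^\ell$, after which $T$ is doubled or sent to $0$ with probability $\tfrac12$ each, exactly as in the second half of the proof of Theorem~\ref{AB-dss1}. If $T$ hits $0$ using only indices $\le n$, then $S_{a,n}'=S_{a,n}''$.

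For Stage~1, \Ref{AB-single-assn} places an element of $\tE(1,\ps)$ in every block $\{jk+1,\ldots,(j+1)k\}$, and for such $i$ one has $\pr[X_i'\ne X_i'']\ge\ps/i$; summing these weights over the blocks meeting $(a,m]$ gives a Pareto-type tail $\pr[i^*>m]\Le\bigl(C_1(a+1)/m\bigr)^{c_1}$, with $C_1,c_1>0$ depending only on $k,\ps$. So, on an event of probability $1-\bo{\an^{\g}}$, $i^*\le(a+1)\an^{-\g/c_1}$, whence $|1\pm i^*|\le(a+1)\an^{-\g/c_1}+1$ and $\log(n/i^*)\ge(1-\g/c_1)\log(1/\an)$. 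For Stage~2, the recursion for the odd part of $T$ under $d=2^{f_2(T)}$ forces $T$ to the form $\pm2^\ell$ within $\lceil\log_2|1\pm i^*|\rceil\le\log_2 i^*+1$ jumps, and a further geometric number $G$, $\pr[G>t]=2^{-t}$, then reaches $0$; hence the number of jumps needed is $N\le\log_2 i^*+1+G$, so $\pr[N>\log_2 i^*+1+\g\log_2(1/\an)]=\bo{\an^{\g}}$. On the other hand, running the $S_2$-construction from the proof of Theorem~\ref{AB-dss1} over $(i^*,n]$, using~\Ref{AB-pair-assn} for $D=\{2^{g+1},g\ge0\}$ and the block condition, one can select disjoint pairs whose firing probabilities $\qqq(i_l,d_l)\ge\ps/(i_l+d_l)$ sum to at least $\ps(1-\f)\log(n/i^*)/(4k)$ for a chosen $0<\f<1$, and the martingale Chernoff inequalities show that the number $M$ of jumps these pairs produce exceeds this bound with probability $1-\bo{\an^{\g}}$.

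On the intersection of these events the coupling completes inside $\{1,\ldots,n\}$ provided $N\le M$, i.e.\ provided $\log_2 i^*+1+\g\log_2(1/\an)\le\ps(1-\f)\log(n/i^*)/(4k)$. Using $\log i^*\le\log(a+1)+(\g/c_1)\log(1/\an)$ and $\log(n/i^*)\ge(1-\g/c_1)\log(1/\an)$, this reduces to an inequality between fixed multiples of $\log n$ and of $\log(1/\an)$, which can be arranged by choosing $\g$ (and $\f$) small enough in terms of $c_1,\ps,k$; summing the $\bo{\an^{\g}}$ error terms then gives $\dtv{\law{S_{a,n}}}{\law{S_{a,n}+1}}\Le C\an^{\g}$. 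When $a+1$ is a fixed fraction of $n$ the asserted bound holds automatically for $C$ large, its right-hand side then exceeding $1$.

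The step I expect to be the crux is this last balancing of $N$ against $M$, and making it hold uniformly in $a$. Because $i^*\ge a+1$ always, Stage~2 must clear on the order of $\log_2 i^*\asymp\log_2(a+1)$ bits of $T$, whereas the room left in $(i^*,n]$ supplies only $\asymp\log\bigl(n/(a+1)\bigr)=\log(1/\an)$ worth of jumps; keeping the former affordable when $a$ is a non-negligible power of $n$ is what forces the Pareto control of $i^*$ from~\Ref{AB-single-assn} and a conservative exponent $\g$, and — for the whole range $0\le a<n$ rather than just $a=\lo n$ — probably a refinement of the cleanup phase (for instance interleaving further parity-changing moves to keep $|T|$ from ballooning, or choosing jump sizes that clear several bits at once). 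That refinement, together with the bookkeeping needed to bring all error terms to the common order $\an^{\g}$, is where the real work lies.
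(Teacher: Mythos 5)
Your construction is exactly the one the paper has in mind: the paper in fact states Theorem~\ref{AB-extra} \emph{without} proof, offering only the two-stage sketch that you follow, and your fleshing-out of it is largely sound — the Pareto-type tail for the first splitting index $i^*$ obtained from~\Ref{AB-single-assn}, the bound $N\le\log_2 i^*+1+G$ for the number of cleanup jumps, and the Chernoff/martingale control of the number $M$ of available jumps, run as for the set $S_2$ in the proof of Theorem~\ref{AB-dss1}. (Two small repairs: you cannot set $X_i'=X_i''=0$ at even $i$; you must set $X_i'=X_i''$ with the law of $X_i$ to preserve marginals, and likewise the odd-index splitting coupling must keep both marginals equal to $\law{X_i}$, agreeing off the event of a split, as in the paper's description of $\tqqq(i)$ and $\tE(1,\ps)$.)

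The difficulty you flag at the end is, however, a genuine gap and not mere bookkeeping, and the sentence in your third paragraph asserting that the balance of $N$ against $M$ ``can be arranged by choosing $\g$ (and $\f$) small enough'' is not correct once $a$ is a sizeable power of $n$. Since $i^*\ge a+1$, the cleanup phase needs of order $\log_2(a+1)$ jumps (each jump of size $2^{g+1}$ changes the minimal signed-binary representation of $T$ by at most one term, and nothing in the hypotheses forces $1\pm i^*$ to have few such terms), whereas the pairs available after $i^*$ are disjoint and each fires with probability $O\bigl(1/(i+d)\bigr)$ in the worst case compatible with~\Ref{AB-pair-assn}, so $M$ is at most a constant multiple of $\log\bigl(n/(a+1)\bigr)$ with high probability; no choice of $\g,\f$ changes either side. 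Consequently, for $a+1=n^{\b}$ with $\b$ close to $1$ the coupling fails with probability tending to one while the asserted bound $C\{(a+1)/n\}^{\g}$ tends to zero, and taking $C$ large only trivializes the claim when $(a+1)/n$ is bounded below, not in this intermediate range. What you have therefore proves the bound for $a+1\le n^{\d}$ with $\d$ depending on $\ps$, $k$ and an upper bound on the firing probabilities, but not for the full stated range $0\le a<n$ (nor for arbitrary $a=\lo{n}$, which is what Section~\ref{sec-4} uses); closing that range needs a genuinely new ingredient — e.g.\ admissible jump sizes richer than powers of two so that $|T|$ can be targeted directly, or a non-coupling (characteristic-function) argument — and since the paper supplies no proof of this theorem, your honest identification of this as ``where the real work lies'' is exactly right, but the proposal as written does not yet establish the statement as claimed.
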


\section{Approximation by the Dickman distribution}
\label{dickman}
As in the Introduction,
let $(\cn_1, \dotsc, \cn_n)$ be the component counts of a decomposable
combinatorial structure of size~$n$,
related to the sequence of independent random variables $(Z_i,i\ge1)$ 
through the Conditioning Relation~\Ref{AB-cond-rel}.  In this section, 
we wish to bound the distance between the distribution of the normalized sum 
$n^{-1}T_{a,n} := n^{-1}\sum_{i=a+1}^n iZ_i$ and the Dickman distribution~$P_\th$,
when the quantities $\th_i := i\ex Z_i$ converge in some weak, average
sense to~$\th$, and when $i\pr[Z_i=1] \sim \th_i$ also.  
In order to exploit the extra structure in the
distributions of the random variables~$Z_i$ that occurs in many of the
classical examples, it is convenient first to introduce some further notation.

We suppose that the random variables $Z_i$ can be written as  sums 
$Z_i := \sum_{j=1}^{r_i}Z_{ij}$,
where the random variables $(Z_{ij},\,i\ge1,1\le j\le r_i)$ are all independent,
and, for each~$i$, the $Z_{ij}$, $1\le j\le r_i$ are identically distributed.
This can always be taken to be the case, by setting~$r_i=1$, but~$r_i$
could be chosen arbitrarily large if~$Z_i$ were infinitely divisible, and
the bounds that we obtain may be smaller if the~$r_i$ can be chosen to be
large.  We  define
\eq\label{AB-epsilon-def}
   \e_{ik} \Def \frac{ir_i}{\th_i}\pr[Z_{i1}=k] - \bone\{k=1\},\quad k\ge1,
\en
so that, since $\th_i = i\ex Z_i$, the $\e_{ik}$ can be expected to be small
if also $i\pr[Z_i=1] \sim \th_i$.  We then define 
$\m_i := \ski k\sup_{j\ge i}|\e_{ik}|$, which we assume to be finite. 

We now specify our analogue of~\Ref{AB-logarithmic}.  Clearly, assuming
$\m_i \to 0$ yields random variables~$Z_i$ that mostly only take the values
$0$ or~$1$, but we also need some regularity among the~$\th_i$.  To make
this precise, we define
\eq\label{AB-theta-tilde}
  \tthnm \Def \sup_{j\ge0}
     \Blm \th - \frac1m \sum_{i=1}^m \th_{jm+i} \Brm\,,
\en
and assume that it converges to zero, for some $\th>0$, as $m\to\infty$.
In addition, we need to be able to apply Theorem~\ref{AB-dss1}.
Define $i_0 := \min\{i\colon\,\m_i \le 1/2\}$, and set
$E'(d,\ps) := \{i \ge i_0\colon\, \min(\th_i,\th_{i+d}) \ge 4\ps\}$,
 noting that then $E'(d,\ps) \subset E(d,2\ps)\cap[i_0,\infty)$.
Then our simplest condition is the following.

\begin{defin}\label{QLC}
  We say that a decomposable combinatorial structure satisfies the
quasi--logarithmic condition QLC if it satisfies the Conditioning 
Relation~\Ref{AB-cond-rel}, if
\[
   \lim_{i\to\infty} \m_i \Eq 0;\quad \lim_{m\to\infty}\tthnm \Eq 0\quad
      \mbox{for some}\ \th > 0,
\]
and if, for some $r,s$ coprime, $\ps > 0$ and~$D$ defined in~\Ref{AB-ddef},
\Ref{AB-pair-assn} is satisfied with $E'$ for~$E$.
\end{defin}

For quantitative estimates, a slightly stronger assumption is useful.

\begin{defin}\label{QLC2}
  We say that a decomposable combinatorial structure satisfies the
quasi--logarithmic condition QLC2 if it satisfies the Conditioning 
Relation~\Ref{AB-cond-rel}, if
\[
    \m_i \Eq O(i^{-\a});\quad \tthnm \Eq O(m^{-\b})\quad
      \mbox{for some}\ \th,\a,\b > 0,
\]
and if, for some $r,s$ coprime, $\ps > 0$ and~$D$ defined in~\Ref{AB-ddef},
\Ref{AB-pair-assn} is satisfied with $E'$ for~$E$.
\end{defin}

Under such conditions, we now prove the close link between $\law{n^{-1}T_{a,n}}$
and~$P_\th$.  Our method of proof involves showing first that $\law{T_{0,n}}$
is close to the compound Poisson distribution~$\CP(\th,n) := \law{\sn iZ^*_i}$,
where the $Z^*_i \sim \Po(i^{-1}\th)$ are independent; the closeness
of $n^{-1}\CP(\th,n)$ and~$P_\th$ is already known [ABT, Theorems 11.10 and~12.11], and the
Wasserstein distance between $\law{n^{-1}T_{0,n}}$ and~$\law{n^{-1}T_{a,n}}$
is at most~$n^{-1}\sum_{i=1}^a \th_i$. 

To bound the distance between $\law{T_{a,n}}$ and~$\CP(\th,n)$, we use 
Stein's method (Barbour, Chen \& Loh 1992).  For any Lipschitz 
test function~$f\colon\,\ZZ_+\to\re$, one expresses~$f$ in the form
\eq\label{AB-Stein-CP}
    f(j) - \CP(\th,n)\{f\} \Eq \th\sn g_f(j+i) - jg_f(j) , 
\en
for an appropriate function~$g_f$ [ABT, Chapter~9.1].  Hence, for instance, 
the Wasserstein distance between $\law{T_{a,n}}$  and~$\CP(\th,n)$ can be 
estimated by bounding  
\eqa
   \lefteqn{\Blm \ex\Blb 
         \th\sn g_f(T_{a,n}+i) - T_{a,n}g_f(T_{a,n}) \Brb\Brm}\non\\
   &=&  \Blm \sian \ex\{\th g_f(T_{a,n}+i) - iZ_i g_f(T_{a,n})\} 
           + \sum_{i=1}^a \th\ex g_f(T_{a,n}+i) \Brm\,,
   \label{AB-Stein-bnd}
\ena
uniformly for Lipschitz functions~$f\in\Lip_1$, for which functions $\|g_f\| \le 1$ 
[ABT, (9.14)].
The right hand side can now be relatively easily bounded.

First, we re-express the element 
\[
   \ex\{iZ_{i}g_f(T_{a,n})\} \Eq \slri \ex\{iZ_{il}g_f(T_{a,n})\}
\]
of~\Ref{AB-Stein-bnd} by observing that
\[
   \ex\{iZ_{il}g_f(T_{a,n})\} \Eq \frac{\th_i}{r_i}\Blb \ex g_f(T_{a,n}\ui+i)
     + \ski k\e_{ik} \ex g_f(T_{a,n}\ui+ik) \Brb,
\]
where $T_{a,n}\ui := T_{a,n} - iZ_{i1}$, $a < i \le n$.  Hence, to bound~\Ref{AB-Stein-bnd},
we have
\eqa
  \lefteqn{ \Blm \ex\Blb \th\sian g_f(T_{a,n}+i) - T_{a,n}g_f(T_{a,n}) \Brb\Brm}\non\\
    &&\Le \Blm  \sian \{(\th- \th_i) \ex g_f(T_{a,n}+i) 
    + \th_i \ex[g_f(T_{a,n}+i) - g_f(T_{a,n}\ui+i)]\}\Brm \non\\
    &&\mbox{}\qquad + \sian \th_i\ski k|\e_{ik}| \ex |g_f(T_{a,n}\ui+ik)|, \label{AB-Stein-1}
\ena
and 
\eqa
   \lefteqn{\ex\{g_f(T_{a,n}+i) - g_f(T_{a,n}\ui+i)\}}\non\\
   && \Eq \frac{\th_i}{ir_i}\Blb \ex \{g_f(T_{a,n}\ui+2i) - g_f(T_{a,n}\ui+i)\}
             \vphantom{\ski}\right.\non\\
   &&\qquad\mbox{}\left.\qquad
      + \ski \e_{ik}\ex \{g_f(T_{a,n}\ui+i(k+1)) - g_f(T_{a,n}\ui+i)\}\Brb \,; \label{AB-Stein-2}
\ena
and, clearly,
\eq\label{AB-Stein-3}
   \th \sum_{i=1}^a |\ex g_f(T_{a,n}+i)| \Le a\th \|g_f\|.
\en
With the help of these estimates, we can prove the following approximation theorem;
we use the notation $D^1(T)$ to denote $\dtv{\law{T}}{\law{T+1}}$.

\begin{theorem}\label{global dickman approx theorem}
With the definitions above,
\eqa
  \bigdw{\law{n^{-1}T_{a,n}}}{\ddist}
 &\le& n^{-1}(1+\th)^2 + \min_{1\le m\le n}\e_1(n,a,m) \, ,
  \label{global dickman approx bound}
\ena
where $\e_1(n,a,m)$ is given in~\Ref{AB-eps1}.  If QLC holds,
$\bigdw{\law{n^{-1}T_{a_n,n}}}{\ddist} \to 0$ for any sequence $a_n = o(n)$.  
If QLC2 holds, then
$\bigdw{\law{n^{-1}T_{a,n}}}{\ddist} = O(\{(a+1)/n\}^{\h_1})$
for some $\h_1 > 0$.
\end{theorem}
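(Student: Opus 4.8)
The plan is to assemble the bound \Ref{global dickman approx bound} from three ingredients, all of which are already prepared in the discussion preceding the theorem. First, as noted in the text, the Wasserstein distance between $\law{n^{-1}T_{0,n}}$ and $\law{n^{-1}T_{a,n}}$ is at most $n^{-1}\sum_{i=1}^a\th_i$, since $T_{0,n}=T_{a,n}+\sum_{i=1}^a iZ_i$ and $\ee\{\sum_{i=1}^a iZ_i\}=\sum_{i=1}^a\th_i$; this contributes a term that will be absorbed into $\e_1(n,a,m)$ or handled separately. Second, by [ABT, Theorems 11.10 and 12.11], $\dwsimple\bigl(\law{n^{-1}\cp(\th,n)},\ddist\bigr)$ is already known to be $O(n^{-1})$, which is where the $n^{-1}(1+\th)^2$ term comes from (up to the exact constant, which one reads off from the ABT estimate together with the scaling $\dwsimple(\law{cX},\law{cY})=c\,\dwsimple(\law X,\law Y)$). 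Third, and this is the substantive step, one must bound $\dwsimple\bigl(\law{T_{a,n}},\cp(\th,n)\bigr)$, or rather directly $\dwsimple\bigl(\law{n^{-1}T_{a,n}},n^{-1}\cp(\th,n)\bigr)=n^{-1}\dwsimple\bigl(\law{T_{a,n}},\cp(\th,n)\bigr)$, using the Stein expansion \Ref{AB-Stein-CP} and the estimates \Ref{AB-Stein-1}, \Ref{AB-Stein-2}, \Ref{AB-Stein-3}.

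The core of the argument is therefore to take a test function $f\in\Lip_1$, use \Ref{AB-Stein-CP} to write $\ee f(T_{a,n})-\cp(\th,n)\{f\}$ as the expression bounded in \Ref{AB-Stein-bnd}, and then control each piece. The term $\th\sum_{i=1}^a\ee g_f(T_{a,n}+i)$ is $O(a\th)$ by \Ref{AB-Stein-3} and $\|g_f\|\le1$; after dividing by $n$ this is $O(a\th/n)$, again of the right order. The sum over $a<i\le n$ splits, via \Ref{AB-Stein-1}, into a part governed by $\sum_{i=a+1}^n|\th-\th_i|\cdot\|g_f\|$, a part governed by the increments $\ee[g_f(T_{a,n}+i)-g_f(T_{a,n}^{(i)}+i)]$, and a part $\sum_{i=a+1}^n\th_i\sum_{k\ge1}k|\e_{ik}|$. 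The last of these is controlled by the $\m_i$: it is at most $\th^+\sum_{i=a+1}^n\m_i/i\cdot$(something), or more precisely bounded using $\m_i\to0$ (under QLC) or $\m_i=O(i^{-\a})$ (under QLC2). The increment term is handled by \Ref{AB-Stein-2}: the differences $g_f(\cdot+2i)-g_f(\cdot+i)$ and $g_f(\cdot+i(k+1))-g_f(\cdot+i)$ are what bring in $D^1(T_{a,n}^{(i)})$ — since $g_f$ is bounded with bounded first difference, and one estimates $|\ee\{h(W+j)-h(W)\}|$ for $|h|,\,\|\Delta h\|\le1$ by something like $\min(j\cdot 1,\ (\text{total variation }W\leftrightarrow W+j))$, and the latter telescopes into $\sum_{t=0}^{j-1}D^1(W+t)\le j\,D^1(W)$ after noting $D^1$ is essentially translation-invariant. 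Here is exactly where Theorem~\ref{AB-dss1} enters: under QLC2 it gives $D^1(T_{a,n}^{(i)})=O(\{(a+1)/n\}^{\g})$ (the deleted summand $iZ_{i1}$ changes $S_{a,n}$ only cosmetically), so the increment contributions are small.

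The genuinely delicate point — the main obstacle — is the regularity term $\sum_{i=a+1}^n|\th-\th_i|$, which need not be $o(n)$ at all, since the $\th_i$ are only assumed to converge to $\th$ \emph{on average} in the sense of \Ref{AB-theta-tilde}. One cannot bound $\ee g_f(T_{a,n}+i)$ pointwise in $i$ and sum $|\th-\th_i|$; instead one must group the indices $i$ into consecutive blocks of length $m$ and exploit that, by \Ref{AB-theta-tilde}, $\bigl|\th-\tfrac1m\sum_{i\in\text{block}}\th_i\bigr|\le\tthnm$, together with the fact that within a block of length $m$ the quantities $\ee g_f(T_{a,n}+i)$ vary by at most $\bo{m/n}$-ish (again using $\|\Delta g_f\|\le 1$ and $D^1(T_{a,n})$ small, so that shifting the argument by up to $m$ costs at most $m\,D^1(T_{a,n})$). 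Summing the block-averaged discrepancy then yields a bound of the shape $n\,\tthnm + (\text{number of blocks})\cdot m\cdot D^1 + \ldots$, and balancing $m$ against $n$ gives the $\min_{1\le m\le n}\e_1(n,a,m)$ in the statement; the explicit $\e_1(n,a,m)$ in \Ref{AB-eps1} is just the sum of all these block-counting, $\m_i$-counting, and $D^1$-counting contributions. For the qualitative claim under QLC one checks that each piece of $\e_1(n,a_n,m)\to0$ as $n\to\infty$ along a suitable $m=m_n\to\infty$ with $m_n=o(n)$ (using $\tthnm\to0$, $\m_i\to0$, and $D^1(T_{a_n,n})\to0$ from Theorem~\ref{AB-dss1}), and for QLC2 one optimizes the power of $(a+1)/n$ by choosing $m$ a suitable power of $n/(a+1)$, producing the exponent $\h_1>0$.
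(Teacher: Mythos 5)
Your proposal is essentially the paper's own argument: Stein's method for $\CP(\th,n)$ via \Ref{AB-Stein-CP}--\Ref{AB-Stein-3}, with the $(\th-\th_i)$ term handled by blocking in $m$-blocks against $\tthnm$ and the smoothness input $D^1(T_{a,n})$ from Theorem~\ref{AB-dss1} (this is exactly Lemma~\ref{heartlemma}(i)), followed by the triangle inequality with the ABT bound $n^{-1}(1+\th)^2$ for $\bigdw{n^{-1}\cpspec{\th}{n}}{\ddist}$, and the same choices of $m$ under QLC and QLC2. The only real deviation is your treatment of the increment terms from \Ref{AB-Stein-2} via $D^1(T_{a,n}\ui)$: note that deleting the independent summand $iZ_{i1}$ can only \emph{increase} the unit-shift total variation distance, so your ``cosmetic'' claim would need the coupling of Theorem~\ref{AB-dss1} rerun avoiding index $i$; the paper sidesteps this entirely in the Wasserstein setting by bounding these terms crudely through $\|g_f\|\le1$, which yields the $\th^*\s^*_n$ contribution in \Ref{AB-eps1} (the $D^1$-based treatment is reserved for the local Theorem~\ref{local dickman approx theorem}, where \Ref{AB-prob-bnds} handles the deleted summand carefully). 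Your preliminary comparison of $\law{n^{-1}T_{0,n}}$ and $\law{n^{-1}T_{a,n}}$ is redundant, since the Stein identity already produces the $\sum_{i\le a}$ term bounded in \Ref{AB-Stein-3}.
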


\begin{proof}
We first consider $\bigdw{\law{T_{a,n}}}{\cpspec{\theta}{n}}$, for which we
bound the quantities appearing in~\Ref{AB-Stein-bnd}, as addressed in
\Ref{AB-Stein-1}--\Ref{AB-Stein-3}.
The contribution from~\Ref{AB-Stein-3} is immediate.
Then, defining
\eqs
  \th^* &:=& \max\{1,\th,\sup_{i\ge1}\th_i\}\quad\mbox{and}\quad 
    \s^*_n \Def \sn \max\Blb \m_i,\frac1{ir_i} \Brb, 
\ens
we can easily bound the third element in~\Ref{AB-Stein-1}
by $\th^* \s^*_n \|g_f\|$,  and the second, using~\Ref{AB-Stein-2},
contributes at most $4\th^* \s^*_n \|g_f\|$, since also $ir_i \ge 1$.  For the
first term, we use Lemma~\ref{heartlemma}(i) to give
\[
  \Blm \sn (\th- \th_i) \ex g_f(T_{a,n}+i)\Brm \Le
   \{2 \th^* m + n\tthnm +
          (1/4) \th^* mn D^1(T_{a,n})\}\|g_f\|.  
\]
In all, and using $\|g_f\| \le 1$, this gives the bound
\eq\label{AB-eps0}
    \bigdw{\law{T_{a,n}}}{\cpspec{\theta}{n}} \Le n\e_1(n,a,m),
\en
with
\eq\label{AB-eps1}
   \e_1(n,a,m) \Def \frac14 \th^* m D^1(T_{a,n}) + \tthnm + n^{-1}\th^*\{5 \th^* \s^*_n
   + 2m  + a\}\,.
\en

This bound, together with the inequality
\begin{equation*}
  \bigdw{\law{n^{-1}T_{a,n}}}{\ddist} \Le n^{-1} \bigdw{\law{T_{a,n}}}{\cpspec{\theta}{n}}
    + \bigdw{n^{-1}\cpspec{\theta}{n}}{\ddist} \, ,
\end{equation*}
now give the required estimate, since
\begin{equation*}
   \bigdw{n^{-1}\cpspec{\theta}{n}}{\ddist} \Le n^{-1}(1+\theta)^2 \, ;
\end{equation*}
see [ABT, Theorem 11.10].

If QLC holds, $D^1(T_{a,n}) = O(\{(a+1)/n\}^\g)$ for some $\g > 0$, and choosing $m
= m_n$ tending to infinity slowly enough ensures that $\e_1(n,a_n,m_n)\to 0$.
If QLC2 holds, choose~$m$ to be an appropriate power of $\{(a+1)/n\}$.
\end{proof}

With a little more difficulty, one can prove the analogous local approximation to
the distribution of~$T_{a,n}$. This the main tool for establishing the asymptotic
behaviour of quasi-logarithmic combinatorial structures. 

\begin{theorem}\label{local dickman approx theorem}
For any $0 \le a \le n$ and any $r \ge 2a+1$, we have 
\begin{equation}\label{llt equation}
   | n \bigpp{T_{a,n}=r} - \ddens(r/n) | \Le \min_{1\le m\le n}\e_5(n,a,m;r) \, ,
\end{equation}
with $\e_5(n,a,m;r)$ as defined in~\Ref{AB-eps5} below.  
If QLC holds, it follows that \break
$\sup_{r\ge nx}| n \bigpp{T_{a,n}=r} - \ddens(r/n) | \to 0$ for any $x>0$ and any
sequence $a_n = o(n)$.  If QLC2 holds, then
$\sup_{r\ge nx}| n \bigpp{T_{a,n}=r} - \ddens(r/n) | = O(\{(a+1)/n\}^{\h_2})$,
for any $x>0$ and for some $\h_2 > 0$.
\end{theorem}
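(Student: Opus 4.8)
The plan is to rerun the argument of Theorem~\ref{global dickman approx theorem}, with the Lipschitz test functions replaced by a single point indicator and the Wasserstein bound on $\cpspec{\th}{n}$ replaced by its local counterpart. Fix $r\ge 2a+1$, put $f:=\bone\{\cdot=r\}$, let $g:=g_f$ solve the Stein equation~\Ref{AB-Stein-CP} for this~$f$, and evaluate at $j=T_{a,n}$; taking expectations and multiplying by~$n$,
\[
   n\pr[T_{a,n}=r]-n\,\cpspec{\th}{n}\{r\}\Eq n\,\ex\Blb\th\sn g(T_{a,n}+i)-T_{a,n}g(T_{a,n})\Brb.
\]
The right-hand side is then bounded along the lines of~\Ref{AB-Stein-1}--\Ref{AB-Stein-3}, using the re-expression~\Ref{AB-Stein-2} and the term-by-term estimates that precede it; the remaining piece $|n\,\cpspec{\th}{n}\{r\}-\ddens(r/n)|$ is the local-approximation error for $n^{-1}\cpspec{\th}{n}$ against~$\ddist$, controlled by [ABT, Theorem~12.11], which tends to zero uniformly for $r\ge nx$. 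The inputs are thus the same as for the Wasserstein case, except that the bounds on~$g_f$ are now those appropriate to a point-mass test function [ABT, Chapter~9], and these are weaker instruments, so the analysis has to be done more carefully.

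The difference from the Wasserstein proof is quantitative: since the quantity of interest, $\pr[T_{a,n}=r]-\cpspec{\th}{n}\{r\}$, is of the order of the point probabilities rather than of order~$1$, the Lipschitz control $\|g_f\|\le1$ is no longer available, and each argument shift occurring in~\Ref{AB-Stein-1}--\Ref{AB-Stein-2} and in the boundary term~\Ref{AB-Stein-3} must be absorbed through the smoothness estimates on~$g_f$, supplemented where necessary by $|\ex g_f(W+j)-\ex g_f(W+j')|\le\|g_f\||j-j'|\,D^1(W)$ with $W=T_{a,n}\ui$. Two points keep this in hand. First, the shifts of size~$ik$ appearing in~\Ref{AB-Stein-2} would be ruinous were it not for the prefactors $\th_i/(ir_i)$ multiplying those differences, which cancel the factor~$i$; what is left is summed against $\ski k|\e_{ik}|\le\m_i$ and then over~$i$, giving a contribution involving $\s^*_n$. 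Second, the power bound $D^1(T_{a,n}\ui)=\bo{\{(a+1)/n\}^\g}$ holds \emph{uniformly} in the relevant~$i$: deleting the single index~$i$ from $T_{a,n}$ destroys at most one of the pairs used in the coupling of Theorem~\ref{AB-dss1}, so QLC (respectively QLC2) still yields the same power, at worst with a larger constant. The term $\sum_{i=a+1}^n(\th-\th_i)\ex g_f(T_{a,n}+i)$ is handled, exactly as before, by Lemma~\ref{heartlemma}, in terms of $\tthnm$ and $D^1(T_{a,n})$. The hypothesis $r\ge 2a+1$ is what keeps $r$, and all the shifted arguments $r-i$ with $i\le a$, positive and in the range where the $\cpspec{\th}{n}$-Stein estimates and the density~$\ddens$ behave well. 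Collecting the contributions produces the stated bound, with $\e_5(n,a,m;r)$ assembled---as in~\Ref{AB-eps5}---from $D^1(T_{a,n})$, $\tthnm$, $\s^*_n$, $m$ and~$a$, weighted by the point-mass bounds on~$g_f$, together with the discrepancy $|n\,\cpspec{\th}{n}\{r\}-\ddens(r/n)|$.

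I expect the main obstacle to be precisely this sharpening: one has to assemble estimates on~$g_f$ and its differences, for a point-mass test function in the $\cpspec{\th}{n}$ setting, that are strong enough that, after the contributions are multiplied by~$n$, the bound is still controlled by the small quantities $D^1(T_{a,n})$, $\tthnm$ and $n^{-1}\s^*_n$; and one has to verify that the argument shifts, which may be as large as~$ik$, really cost only one factor $D^1(T_{a,n}\ui)$ apiece and do not accumulate---the cancellation coming from the $\th_i/(ir_i)$ prefactors being essential here---as well as the uniformity in~$i$ of the power bound for $D^1(T_{a,n}\ui)$, which has to be read off the coupling construction of Theorem~\ref{AB-dss1}.

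The rest is bookkeeping. Under QLC, Theorem~\ref{AB-dss1} gives $D^1(T_{a_n,n})=\bo{\{(a_n+1)/n\}^\g}\to0$, while $\tthnm\to0$ as $m\to\infty$, $n^{-1}\s^*_n\to0$, and $|n\,\cpspec{\th}{n}\{r\}-\ddens(r/n)|\to0$ uniformly for $r\ge nx$; choosing $m=m_n\to\infty$ slowly enough then makes $\sup_{r\ge nx}\e_5(n,a_n,m_n;r)\to0$. Under QLC2, taking $m$ to be an appropriate power of $\{(a+1)/n\}$ balances the remaining terms against the power rate of the Dickman term, and yields $\sup_{r\ge nx}|n\pr[T_{a,n}=r]-\ddens(r/n)|=\bo{\{(a+1)/n\}^{\h_2}}$ for a suitable $\h_2>0$.
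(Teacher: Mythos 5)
Your overall architecture differs from the paper's in a way that creates a genuine gap. You propose to solve the Stein equation \Ref{AB-Stein-CP} with the point-mass test function $f=\bone_{\{r\}}$ and then bound $n|\pr[T_{a,n}=r]-\CP(\th,n)\{r\}|$ by running \Ref{AB-Stein-1}--\Ref{AB-Stein-3} with ``point-mass bounds on $g_f$''. But the quantity you need is of order $o(1/n)$, and the only bound on $g_f$ available from the cited toolbox is of order $1/r$ (e.g.\ by writing $\bone_{\{r\}}=\bone_{[0,r]}-\bone_{[0,r-1]}$ and using [ABT, Lemma~9.3]); you neither state nor prove anything sharper, and with $\|g_f\|=O(1/r)$ the scheme fails after multiplication by $n$: the boundary term \Ref{AB-Stein-3} contributes $\th\sum_{i=1}^a|\ex g_f(T_{a,n}+i)|\le a\th\|g_f\|$, i.e.\ $O(a\,n/r)=O(a/x)$, which does not vanish as $a=a_n\to\infty$; and the first term of \Ref{AB-Stein-1}, treated via Lemma~\ref{heartlemma}(i) as in the Wasserstein proof, contributes $\|g_f\|\,(2\th^*m+n\tthnm+\tfrac14\th^*mnD^1(T_{a,n}))$, whose $n\tthnm$ and $mnD^1$ pieces become $O(n\tthnm/x)$ and $O(mnD^1(T_{a,n})/x)$ after the factor $n$ --- far too large. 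Making your route work would require genuinely new estimates on the point-mass Stein solution (localization around $r$, an $\ell_1$ or variation bound to pair with $\sup_j\pr[T^{(i)}_{a,n}=j]\le D^1$), which is exactly the ``more careful analysis'' you defer and which is the heart of the matter; saying the instruments are weaker does not supply them.

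The paper avoids this entirely by never comparing point probabilities of $T_{a,n}$ and $\CP(\th,n)$ through a Stein solution. Instead it inserts the indicator itself into the Stein operator, taking $g=\bone_{\{r\}}$, so that $\ex\{\th\sum_{i>a}g(T_{a,n}+i)-T_{a,n}g(T_{a,n})\}$ is exactly $\th\pr[r-n\le T_{a,n}<r-a]-r\pr[T_{a,n}=r]$; the error in this approximate recursion is controlled by point-probability bounds ($\sup_j\pr[T_{a,n}=j]\le D^1(T_{a,n})$, the comparison $\pr[T^{(i)}_{a,n}=s]\le2\pr[T_{a,n}=s]$, and Lemma~\ref{heartlemma}(ii), which has no spurious factor $n$). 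The point estimate is thus reduced to an interval estimate: intervals of $T_{a,n}$ are compared with $\CP(\th,n)$ using the already-proved Wasserstein machinery with $f=\bone_{[0,s-1]}$ and $\|g_f\|\le(1+\th)/(s+\th)$, $\CP(\th,n)$ intervals are compared with $P_\th$ intervals via [ABT, Theorem~11.12], and one returns to the density through the Dickman functional equation $x\,\ddens(x)=\th P_\th[x-1,x)$. Your proposal also omits the extra argument needed for $r>n$ (the tail bounds \Ref{AB-CPsmall}--\Ref{AB-Tsmall} via Lemma~\ref{5.3}), which is required since the supremum over $r\ge nx$ includes such $r$; and your appeal to re-running the coupling of Theorem~\ref{AB-dss1} for $T^{(i)}_{a,n}$ is unnecessary in the paper's treatment, which handles the deleted summand by the elementary bound \Ref{AB-prob-bnds}.
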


\begin{proof}
With $x := r/n$, we begin by writing
\begin{equation*}
\begin{split}
   \bigeuknorm{\ddens(x) - n \pp{T_{a,n}=r} } & \Le 
      \frac1x \bigeuknorm{ \theta \pp{r-n \leq T_{a,n} < r - a} - r \pp{T_{a,n}=r} }\\
    & \mbox{}\quad 
      + \Bigeuknorm{ \ddens (x) - \frac1x\theta \bigpp{r-n \leq T_{a,n} < r - a} }\, . 
\end{split}
\end{equation*}

Now the quantity 
\[
    \D_1(r) \Def \theta \pp{r-n \leq T_{a,n} < r - a} - r \pp{T_{a,n}= r}
\]
is of the form $\ex\Blb  \th\sian g(T_{a,n}+i) - T_{a,n} g(T_{a,n}) \Brb$,
as in~\Ref{AB-Stein-1}, with $g := \bone_{\{r\}}$.  Take~$l_0$  such that
$\pr[Z_{l1} = 0] \ge 1/2$ for all $l \ge l_0$, and set 
\hbox{$C(l_0) := \{\min_{1\le l\le l_0}\max_{j \ge 1}\pr[Z_{l1} = j]\}^{-1}$;}
then we have
\eq\label{AB-prob-bnds}
   \pr[T_{a,n}\ui = s] \Le 2 \pr[T_{a,n} = s],\ i\ge l_0;\qquad
     \pr[T_{a,n}\ui = s] \Le C(l_0) \sup_{j\ge1} \pr[T_{a,n} = j]\,,
\en
for all $s\ge0$ and $1\le i < l_0$.
Note also that, by considering expectations of functions of the form~$\bone_{[0,j]}$,
\eq\label{AB-point-bnd}
   \sup_{j\ge1} \pr[T_{a,n} = j] \Le D^1(T_{a,n}).
\en 
Using these bounds, we can bound the third element in~\Ref{AB-Stein-1} by
\[
   \th^* \Blb \sum_{i=1}^{l_0-1} C(l_0)\m_i D^1(T_{a,n}) 
      + \sum_{i=l_0}^{l-1} 2\m_i D^1(T_{a,n}) + 2\m_l\Brb,
\]
for any $l\ge l_0$, since $\sum_{i = l}^n \pr[T_{a,n} = r - ik] \le 1$ for all $k\ge1$.
The second element is bounded, using~\Ref{AB-Stein-2}, in a very similar way, giving
\[
    2\th^* \Blb \sum_{i=1}^{l-1} (C(l_0)\vee2)\frac1{ir_i}(1+\m_i) D^1(T_{a,n}) 
       + \frac2{lr_l}(1+\m_l)\Brb.
\]
Finally, the first element in~\Ref{AB-Stein-1} is bounded by Lemma~\ref{heartlemma}(ii) as
\[
   \Blm \sian (\th-\th_i)\pr[T_{a,n}=r-i] \Brm \Le \tthnm
      + m\th^*\Bl 2 + \frac{m}6 \Br D^1(T_{a,n}).
\]
Combining these estimates, we conclude that, for any $l\ge l_0$,
\eq\label{AB-1st-bnd}
   \Blm \D_1(r) \Brm \Le \e_2(n,a,m),
\en
where
\eqa
  \lefteqn{\e_2(n,a,m) \ :=\ }\non\\ 
    && \th^* \min_{l \ge l_0}\Blb \sum_{i=1}^{l-1} (C(l_0)\vee2)
            \Bl\frac2{ir_i}(1+\m_i)+\m_i\Br D^1(T_{a,n}) 
       + 4\Bl \frac1{lr_l}(1+\m_l) + \m_l \Br\Brb \non\\
   &&\qquad\mbox{} + m\th^*\Bl 2 + \frac{m}6 \Br D^1(T_{a,n}) + \tthnm\,.
   \label{AB-eps2}
\ena

\sms
The next step is to bound the difference 
\[
    \D_2(r) \Def \bigpp{r-n \leq T_{a,n} < r - a} - \CP(\th,n)\{[r-n , r - a-1]\},
\]
which can once again be accomplished by using \Ref{AB-Stein-CP} and~\Ref{AB-Stein-bnd}.
Since, for $f := \bone_{[0,s-1]}$,
\[
   \|g_f\| \Le (1+\th)/(s + \th),
\]
by [ABT, Lemma~9.3], it follows as in the proof of~\Ref{AB-eps0} in the previous theorem that
\eq\label{AB-K-1}
   |\pr[T_{a,n} < s] - \CP(\th,n)\{[0,s-1]\}| \Le s^{-1}(1+\th)n\e_1(n,a,m),
\en
for any $s \ge 1$.  For $2a < r \le n$, this gives 
\[
   |\D_2(r)| \Le (r-a)^{-1}(1+\th)n\e_1(n,a,m) \Le 2r^{-1}n (1+\th)\e_1(n,a,m).
\]
For $r > n$,  two differences as in~\Ref{AB-K-1} are needed. The first is just as before;
the second is bounded by
\eq\label{AB-eps3-1}
   \e_3(n,a,m;r) \Def
     \min\{(r-n)^{-1}(1+\th)n\e_1(n,a,m), \pr[T_{a,n} < r-n] + \CP(\th,n)\{[0,r-n-1]\}\},
\en
where the alternative is useful if~$r$ is close to~$n$.  Now
\eq\label{AB-CPsmall}
   \CP(\th,n)\{[0,j]\} \Le \prod_{i=j+1}^n \Po(\th i^{-1})\{0\}
     \Eq \exp\Blb - \sum_{i=j+1}^n \th i^{-1}\Brb \Le \Bl \frac{j+1}{n+1} \Br^\th.
\en
Rather similarly,
\eqs
   \lefteqn{\pr[T_{a,n} \le j] \Le \prod_{i=j+1}^n \{\pr[Z_{i1}=0\}^{r_i} 
            \Le \exp\Blb - \sum_{i=j+1}^n \th_i i^{-1}(1-\m_i)\Brb} \\
    &\le& \left[ \exp\Blb - \sum_{i=j+1}^n \Bigl(\frac{\th_i - \th}i \Bigr)\Brb
       \Bl \frac{j+1}{n+1} \Br^\th \right]^{1-\m_{j+1}} \Le
     \Blb 2e^{1+\th^*} \Bl \frac{j+1}{n+1} \Br^{\th - \d(j/2,\th)} \Brb^{1-\m_{j+1}},
\ens
from Lemma~\ref{5.3}, and this in turn gives
\eq\label{AB-Tsmall}
   \pr[T_{a,n} \le j] \Le 2e^{1+\th^*} \Bl \frac{(j\vee j_0)+1}{n+1} \Br^{\th/4},
\en
where $\m_{j+1} \le 1/2$ and $\d(j/2,\th) \le \th/2$ for all $j\ge j_0$.
Using \Ref{AB-CPsmall} and~\Ref{AB-Tsmall} in \Ref{AB-eps3-1}, and optimizing with
respect to~$r$, gives
\[
    \e_3(n,a,m;r) \Le  4e^{1+\th^*} \{\e_1(n,a,m)\}^{\th/(4+\th)} \ =:\ \e_4(n,a,m).
\]
Hence
\eq\label{AB-eps3}
    |\D_2(r)| \Le  2nr^{-1}(1+\th)\e_1(n,a,m) + \e_4(n,a,m)
\en
for all $r \ge 2a+1$.

\sms
The remainder of the estimate is concerned with comparing the density~$\ddens(r/n)$
with $nr^{-1}\th\CP(\th,n)\{[r-n , r - a-1]\}$.  From [ABT, Theorem~11.12], it follows that
\eq\label{AB-3rd-bnd}
    |\CP(\th,n)\{[r-n , r - a-1]\} - P_\th\{[r/n-1 , (r - a)/n)\}| \Le c(\th)n^{-(\th\wedge1)},
\en
for a constant $c(\th)$, and then, from [ABT, (4.23) and~(4.20)],
\eqa
   \lefteqn{|nr^{-1}\th P_\th\{[r/n-1 , (r - a)/n)\} - \ddens(r/n)|}\label{AB-4th-bnd}\\
  &&\Eq nr^{-1}\th|P_\th\{[r/n-1 , (r - a)/n)\} - P_\th\{[r/n-1 , r/n)\}| 
   \Le c'(\th) nr^{-1} (a/n)^{(\th\wedge1)} \non
\ena
so long as $r \ge 2a$.  Combining 
\Ref{AB-1st-bnd}, \Ref{AB-eps3}, \Ref{AB-3rd-bnd} and~\Ref{AB-4th-bnd}, the theorem 
follows with
\eqa
  \e_5(n,a,m;r) &:=&  \frac nr \Bigl\{ 2\th(1+\th) nr^{-1}\e_1(n,a,m) + \e_2(n,a,m)\non\\
     &&\qquad\quad\mbox{} + \th  \e_4(n,a,m)
       + c''(\th)((a+1)/n)^{(\th\wedge1)} \Bigr\}; \label{AB-eps5}
\ena
note that, for $2a \le n/2 \le r\le n$, the bound can be replaced by the uniform
\eq\label{AB-eps5'}
    \e_5'(n,a,m) \Def 2 \Blb \e_2(n,a,m) +  4n^{-1}\th (1+\th)\e_1(n,a,m)
       + c''(\th)((a+1)/n)^{(\th\wedge1)} \Brb\,.
\en

\sms
If QLC holds, $D^1(T_{a,n}) = O(\{(a+1)/n\}^\g)$ for some $\g > 0$, and choosing $m
= m_n$ tending to infinity slowly enough ensures that $\e_l(n,a_n,m_n)\to 0$
for $l=1,2$ and~$4$; this implies that $\e_5(n,a_n,m_n,r)\to 0$ uniformly 
in $r\ge nx$, for any $x>0$.
If QLC2 holds, choose~$m$ to be an appropriate power of $\{(a+1)/n\}$.
\end{proof}

\section{Quasi-logarithmic structures}\label{sec-4}
In this section, we consider the two common properties shared by logarithmic
combinatorial structures that were discussed in the Introduction, and show that
they are also true  for quasi-logarithmic structures.  For each of the properties,
the local approximation of $\pr[T_{a,n}=r]$ in Theorem~\ref{local dickman approx theorem}
is the fundamental relation from which everything else follows.  Other aspects
of the asymptotic behaviour of logarithmic combinatorial structures could be extended to
quasi-logarithmic structures by analogous methods.

\subsection{The size of the largest component}

The following theorem is an extension of a result proved by \ocite{kingman:77} 
in the case of $\theta$-tilted random permutations.
A version for logarithmic structures can be found in [ABT, Theorem~7.13].

\begin{theorem}\label{largest component theorem}
Let
\begin{equation*}
   L^{\scriptscriptstyle (n)}  \Def \max \bigpset{1 \leq i \leq n}{C_i^{\scriptscriptstyle (n)} > 0}
\end{equation*}
be the size of the largest component. Then, if QLC holds,
\begin{equation*}
   \lim_{n \to \infty} \biglaw{n^{-1} L^{\scriptscriptstyle (n)}}  \Eq \law{L} \, ,
\end{equation*}
where $L$ is a random variable concentrated on $(0,1]$, whose distribution is given 
by the density function
\begin{equation*}
   f_{\theta}(x) \Def e^{\gamma \theta} \Gamma(\theta+1) x^{\theta-2} \: \! 
       \ddens\bigl((1-x)/x\bigr) \, , \qquad \text{for all $x \in (0,1]$.}
\end{equation*}
In particular, if $\theta=1$,
\begin{equation*}
    \lim_{n \to \infty} \bigpp{L^{\scriptscriptstyle (n)} \leq n/y}  \Eq \rho(y) \, , 
        \qquad \text{for all $y \geq 1$,}
\end{equation*}
where $\rho$ is Dickman's function \cite{dickman:30}.
\end{theorem}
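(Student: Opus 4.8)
The plan is to derive the limit law for $n^{-1}L^{\scriptscriptstyle (n)}$ from the local approximation in Theorem~\ref{local dickman approx theorem} by conditioning on the position of the largest component, in close parallel with the argument for logarithmic structures in [ABT, Theorem~7.13]. The starting point is the observation that, under the Conditioning Relation~\Ref{AB-cond-rel}, for $1 \le i \le n$ one has
\begin{equation*}
   \bigpp{L^{\scriptscriptstyle (n)} \le i} \Eq
     \frac{\pr[T_{0,i}=n,\ Z_j=0\ \text{for}\ i<j\le n]}{\pr[T_{0,n}=n]}
     \Eq \frac{\pr[T_{0,i}=n]}{\pr[T_{0,n}=n]}\prod_{j=i+1}^n \pr[Z_j = 0]^{r_j},
\end{equation*}
since $\{L^{\scriptscriptstyle (n)}\le i\}$ forces $C_j^{\scriptscriptstyle (n)}=0$, equivalently $Z_j=0$, for all $j>i$, and $\pr[Z_j=0]=\pr[Z_{j1}=0]^{r_j}$. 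More generally, fixing the largest component to have size exactly $i$ and conditioning further, one gets a product of a factor $\pr[T_{0,i-1}=n-\ell i]$ coming from the components below size $i$, a combinatorial weight for the $\ell\ge1$ components of size $i$, and the vanishing factor $\prod_{j>i}\pr[Z_j=0]^{r_j}$ for the components above size $i$. Taking $i = \lfloor nx\rfloor$ and letting $n\to\infty$, the plan is to show each of these three pieces converges.

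First I would handle the tail product. Using $\pr[Z_{j1}=0] \ge 1 - \th_j/(j r_j)$ together with the bound on $\e_{jk}$ and $\m_j\to 0$ from QLC, one has $\sum_{j=i+1}^n \log\pr[Z_j=0] = -\sum_{j=i+1}^n \th_j/j + o(1)$ uniformly, and then the averaged-regularity hypothesis $\tthnm\to0$ lets one replace $\sum_{j=i+1}^n \th_j/j$ by $\th\sum_{j=i+1}^n 1/j = \th\log(n/i) + o(1) = \th\log(1/x)+o(1)$; hence $\prod_{j=i+1}^n\pr[Z_j=0]^{r_j} \to x^{\th}$. Second, for the denominator, Theorem~\ref{local dickman approx theorem} with $a=0$ gives $n\pr[T_{0,n}=n] \to \ddens(1)$, and since $\ddens(1) = e^{-\gamma\th}/\Gamma(\th+1) \cdot$ (the Dickman density normalisation; more precisely $p_\th(1)$ as in Vervaat, which combines with the $e^{\gamma\th}\Gamma(\th+1)$ prefactor to give the stated $f_\th$), this produces the constant $e^{\gamma\th}\Gamma(\th+1)$ in $f_\th$. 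For the numerator pieces, Theorem~\ref{local dickman approx theorem} applied with $n$ replaced by $i=\lfloor nx\rfloor$ and $a=0$ gives $i\,\pr[T_{0,i}=m] \to \ddens(m/i)$ uniformly for $m/i \ge x_0>0$; applying this at $m = n - \ell i$, i.e.\ at $m/i = (1-\ell x)/x$, one assembles $\pr[L^{\scriptscriptstyle (n)}\le nx]$ as a Riemann-type sum that converges to $e^{\gamma\th}\Gamma(\th+1)\int_0^x u^{\th-2}\ddens((1-u)/u)\,du = \int_0^x f_\th(u)\,du$, after checking that the $\ell\ge 2$ contributions and the combinatorial weights behave as in [ABT]. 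The $\th=1$ specialisation then follows because $f_1(x)\,dx$ is exactly the law of $1/y$ when $y$ has Dickman's distribution, so $\pr[L^{\scriptscriptstyle (n)}\le n/y]\to\rho(y)$ by the change of variables $x = 1/y$, $\rho$ being Dickman's function normalised by $\rho(y)=1$ for $y\le1$.

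The main obstacle will be making the passage to the limit uniform enough near the endpoint $x\to0$ and controlling the sum over $\ell\ge1$ of the components of size $i$: one needs that, with high probability under the conditioning, there is essentially one giant component, so that the $\ell\ge2$ terms are negligible, and one needs the error term $\e_5(n,0,m;r)$ in Theorem~\ref{local dickman approx theorem} to be small uniformly over the relevant range $r = n-\ell i$ with $r/n$ bounded below. Both are exactly the points where QLC (rather than just the average-logarithmic behaviour) is used: QLC guarantees $D^1(T_{0,n}) = O(n^{-\g})$ via Theorem~\ref{AB-dss1}, which is what forces all the $\e_l$ to vanish, and it guarantees the regularity $\tthnm\to0$ needed for the tail-product limit $x^\th$. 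Once these uniformities are in place, the argument is a routine adaptation of [ABT, Theorem~7.13], and I would organise it so that the bulk of the combinatorial bookkeeping — the weights attached to $\ell$ equal-sized largest components, and the decomposition of the conditioned structure into "$<i$", "$=i$" and "$>i$" parts — is quoted from there, with only the two analytic inputs (the local limit theorem and the tail-product estimate) reproved in the quasi-logarithmic setting.
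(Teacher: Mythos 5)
Your opening identity $\pp{L^{(n)}\le i}=\bigl(\pp{T_{0,i}=n}/\pp{T_{0,n}=n}\bigr)\prod_{j=i+1}^n\pp{Z_j=0}$ is exactly the paper's starting point, and the three limits you extract are exactly the paper's three ingredients: the tail product tends to $x^{\th}$ via $\m_i\to0$ and the averaging hypothesis $\tthnm\to0$ (Lemma~\ref{5.3}); $n\pp{T_{0,n}=n}\to \ddens(1)$; and Theorem~\ref{local dickman approx theorem} applied to the truncated sum of size $\lfloor nx\rfloor$. Followed through directly, these give $\pp{n^{-1}L^{(n)}\le x}\to x^{\th}\ddens(1/x)/\bigl(x\,\ddens(1)\bigr)=F_\th(x)$, which the paper identifies with the distribution having density $f_\th$ simply by citing [ABT, p.~108], and the $\th=1$ case follows from $p_1=e^{-\gamma}\rho$. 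So in substance you have the paper's proof.

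However, your execution then takes a detour that is both unnecessary and, as written, not a correct evaluation of your own identity. In the CDF formula the numerator is $\pp{T_{0,\lfloor nx\rfloor}=n}$, so the local theorem is invoked once, at the single argument $n/\lfloor nx\rfloor\approx 1/x$; there is no sum over $\ell$, no evaluation at $m=n-\ell i$, and no Riemann-type sum to assemble. The decomposition over the number $\ell$ of components of size exactly $i$, with probabilities $\pp{T_{0,i-1}=n-\ell i}$, belongs to the \emph{local} statement about $\pp{L^{(n)}=\lfloor nx\rfloor}$ (the paper's subsequent theorem), and if you insist on recovering the distribution function by summing such local probabilities over $i\le nx$ you take on genuinely extra work that the direct route avoids: uniformity of the error $\e_5$ in $i$ down to small $i$, control of the $\ell\ge2$ terms, and the fact that the local limit involves $\th_{\lfloor nx\rfloor}$ rather than $\th$, so a further averaging over $i$ would be needed to produce $\int_0^x f_\th(u)\,du$. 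Since your argument defers precisely this bookkeeping to [ABT], the proof as proposed is incomplete unless you either do that extra work or simply drop the detour and follow your first display, which is the paper's argument. One small slip besides: the tail product is $\prod_{j>i}\pp{Z_j=0}=\prod_{j>i}\pp{Z_{j1}=0}^{r_j}$, not $\prod_{j>i}\pp{Z_j=0}^{r_j}$.
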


\begin{proof} Fix $x \in (0,1]$. Then
\begin{equation}\label{AB-big-1}
  \bigpp{n^{-1} L^{\scriptscriptstyle (n)} \leq x} 
    \Eq \bigpp{C_{\lfloor nx \rfloor+1}^{\scriptscriptstyle (n)}= \dotsb 
           = C_n^{\scriptscriptstyle (n)}=0}
   \Eq \! \! \! \! \prod_{i=\lfloor nx \rfloor+1}^n \! \bigpp{Z_i=0} \: 
              \frac{\bigpp{T_{0,\lfloor nx\rfloor}=n}}{\bigpp{T_{0,n}=n}} \, .
\end{equation}
Theorem \ref{local dickman approx theorem} yields
\begin{equation}\label{AB-big-2}
     \frac{\bigpp{T_{0,\lfloor nx \rfloor}=n}}{\bigpp{T_{0,n}=n}}  
      \Eq \frac{n\ddens(n/\lfloor nx \rfloor)}{\lfloor nx \rfloor \: \! \ddens(1)} \,
       \Blb 1 + O(\min_m \e_5\bigl(\lfloor nx \rfloor,0,m;n) + \min_m \e_5(n,0,m;n)\bigr)\Brb .
\end{equation}
Writing $\theta_i:=i\ee Z_i$ and $y_i  := \theta_i(1+E_i)/(ir_i)$, where 
$E_i:=\sum_{k=1}^\infty \varepsilon_{ik}$, 
we obtain 
\begin{equation}\label{AB-big-3}
  \prod_{i=\lfloor xn \rfloor+1}^n \! \bigpp{Z_i=0}  \Eq
   \exp \biggl(- \! \! \! \! \sum_{i=\lfloor xn \rfloor+1}^n \frac{\theta_i}{i} \biggr)
   \exp \biggl(- \! \! \! \! \sum_{i=\lfloor xn \rfloor+1}^n \frac{\theta_i E_i}{i} \biggr)
   \prod_{i=\lfloor xn \rfloor+1}^n \biggl(\frac{1-y_i}{e^{-y_i}}\biggr)^{r_i} \, .
\end{equation}
From Lemma \ref{5.3}, 
\[
    \exp \biggl(- \! \! \! \! \sum_{i=\lfloor xn \rfloor+1}^n \frac{\theta_i}{i} \biggr)
       \Eq x^\th \{1 + O(\d(m,\th) + m/(nx))\},
\]
for any $m < \lfloor nx \rfloor/2$; then, easily,
\[
    \exp \biggl(- \! \! \! \! \sum_{i=\lfloor xn \rfloor+1}^n \frac{\theta_i E_i}{i} \biggr)
     \Eq 1 + O(\m_{\lfloor nx \rfloor})
\]
and
\[
    \prod_{i=\lfloor xn \rfloor+1}^n \biggl(\frac{1-y_i}{e^{-y_i}}\biggr)^{r_i}
     \Eq 1 + O(n^{-1}),
\]
so that $\prod_{i=\lfloor xn \rfloor+1}^n \! \bigpp{Z_i=0} \sim x^\th$ under QLC.
Combining this with~\Ref{AB-big-1} and~\Ref{AB-big-2}, it follows that then
\begin{equation}\label{AB-big-4}
  \lim_{n \to \infty} \bigpp{n^{-1} L^{\scriptscriptstyle (n)} \leq x}  \Eq
      \frac{x^{\theta} \ddens(1/x)}{x \: \! \ddens(1)} \ =:\ F_{\theta}(x) \, ,
\end{equation}
where $F_\theta$ is a distribution function with density $f_\theta$ [ABT, p.~108].
 If $\theta=1$, then $\ddens(x)=e^{-\gamma} \rho(x)$. This
proves the theorem.

Under QLC2, the convergence rate in~\Ref{AB-big-4} for each~$x>0$ is of order~$O(n^{-\h_3})$,
for some $\h_3 > 0$.
\end{proof}

One can also prove local versions of the convergence theorem.  However, they have to
involve the particular sequence~$\th_i$, since, for instance, $\pr[L\un = r] = 0$
if $\th_r=0$, because then $Z_r$, and hence also $C\un_r$, are zero a.s.  A typical
result is as follows.
 
\begin{theorem}
 If QLC holds, then, for any~$0 < x\le 1$ such that $1/x$ is not an integer,
it follows that
\eq\label{AB-local0}
  \lnti|n\pr[L\un = \nx] - (\th_{\nx}/\th)f_\th(x)| \Eq 0.
\en
Under QLC2, the convergence rate is of order~$O(n^{-\h_4})$, for some $\h_4 > 0$.
\end{theorem}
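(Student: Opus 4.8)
The plan is to run the same argument as in the proof of Theorem~\ref{largest component theorem}, but now keeping track of the size of the component of size exactly $\nx$ rather than merely the event that all components of size $> \nx$ vanish. The starting point is the identity
\[
   \bigpp{L\un = \nx} \Eq \bigpp{C\un_{\nx} > 0,\ C\un_{\nx+1} = \dotsb = C\un_n = 0}
     \Eq \Bl 1 - \pr[Z_{\nx}=0] \Br \!\!\!\! \prod_{i=\nx+1}^n \!\!\!\! \bigpp{Z_i=0}\
       \frac{\bigpp{T_{0,\nx-1}=n-\nx\cdot ?}}{\bigpp{T_{0,n}=n}},
\]
where the numerator must be handled with a little care: conditioning on $C\un_{\nx} = c \ge 1$ removes $c\nx$ from the target. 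Since $1/x$ is not an integer, $\nx < n < 2\nx$ is impossible only for $x \le 1/2$; for $x > 1/2$ one has $C\un_{\nx} \in \{0,1\}$ automatically, and for general $x$ the dominant contribution is still $c=1$ because the conditioning relation and~\Ref{AB-cond-rel} force the remaining mass $n - c\nx$ to be produced by $T_{0,\nx-1}$, whose local probabilities at the relevant points are comparable by Theorem~\ref{local dickman approx theorem}; the terms with $c \ge 2$ are smaller by a factor $O(\pr[Z_{\nx}=1]) = O(\th_{\nx}/\nx) = O(1/n)$ times a bounded ratio, hence negligible. So the leading term is
\[
   \bigpp{L\un = \nx} \ \sim\ \pr[Z_{\nx}=1] \!\!\!\! \prod_{i=\nx+1}^n \!\!\!\! \bigpp{Z_i=0}\
       \frac{\bigpp{T_{0,\nx-1}=n-\nx}}{\bigpp{T_{0,n}=n}}.
\]

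Next I would apply Theorem~\ref{local dickman approx theorem} to both $\bigpp{T_{0,\nx-1}=n-\nx}$ and $\bigpp{T_{0,n}=n}$. For the denominator, $n\bigpp{T_{0,n}=n} \to \ddens(1)$. For the numerator, note $n - \nx = \nx\cdot\{(1-x)/x\} + O(1)$, so with $r = n-\nx$ and the sum running to $\nx - 1$ we get, since $r/\nx \to (1-x)/x$ stays bounded below away from~$0$ (this is where $1/x \notin \zz$, so $x \ne 1$, is used to keep $r$ of order $n$), that $\nx\,\bigpp{T_{0,\nx-1} = n-\nx} \to \ddens((1-x)/x)$, with error controlled by $\e_5(\nx,0,m;n-\nx)$, which tends to zero under QLC by choosing $m = m_n \to \infty$ slowly. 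Meanwhile $\pr[Z_{\nx}=1] = \th_{\nx}/\nx + O(\m_{\nx}/\nx)$ from~\Ref{AB-epsilon-def} and $\m_{\nx} \to 0$, and $\prod_{i=\nx+1}^n \bigpp{Z_i=0} \to x^\th$ exactly as in the three displayed estimates in the proof of Theorem~\ref{largest component theorem} (using Lemma~\ref{5.3} and $\m_{\nx} \to 0$). Assembling,
\[
   n\bigpp{L\un = \nx} \ \sim\ \frac{\th_{\nx}}{\nx}\cdot x^\th \cdot
     \frac{\nx}{n}\cdot\frac{n\,\ddens((1-x)/x)}{\nx\,\ddens(1)}\cdot n
     \ \to\ \frac{\th_{\nx}}{\th}\cdot\frac{e^{\g\th}\G(\th+1)x^{\th-2}\ddens((1-x)/x)}{1}\cdot\th/\th,
\]
and a careful bookkeeping of the powers of $x$ and $\nx/n$ (exactly the same algebra that produces $F_\th$ and $f_\th$ from $x^\th\ddens(1/x)/(x\ddens(1))$ in~\Ref{AB-big-4}) collapses this to $(\th_{\nx}/\th)f_\th(x)$, giving~\Ref{AB-local0}.

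The main obstacle is the first step: justifying that the contributions from $C\un_{\nx} \ge 2$ are genuinely negligible and, more delicately, that the ratio $\bigpp{T_{0,\nx-1}=n-\nx}/\bigpp{T_{0,n}=n}$ behaves well \emph{uniformly} enough. Theorem~\ref{local dickman approx theorem} only gives control of $\bigpp{T_{a,n}=r}$ for $r \ge 2a+1$; here $a=0$ so that is fine, but one must check $n-\nx$ is not so small that the density $\ddens$ is being evaluated near a bad point — this is precisely why the hypothesis excludes $1/x \in \zz$ (equivalently $n-\nx$ of order strictly less than $n$ is avoided, and the degenerate case $x=1$, $n-\nx=O(1)$, is excluded). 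A secondary nuisance is that $\th_{\nx}$ need not converge, so one cannot absorb it into a limit; it must be carried along as an explicit prefactor throughout, which is why the statement is phrased with $\th_{\nx}/\th$ rather than a clean constant. Under QLC2 every "$\to$" above becomes an estimate with a power-of-$\{(\nx+1)/n\} = $ power-of-something-$\asymp 1$... more precisely the errors are powers of $1/n$ coming from $\e_5(\cdot,0,m_n;\cdot)$, from $\m_{\nx} = O(\nx^{-\a})$, and from $\d(m,\th) = O(m^{-\b})$ with $m$ an appropriate power of $1/n$; balancing these yields the rate $O(n^{-\h_4})$ for some $\h_4 > 0$.
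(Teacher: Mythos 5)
You follow essentially the same route as the paper: expand $\pr[L\un=\nx]$ through the conditioning relation as $\prod_{i=\nx+1}^n\pr[Z_i=0]\,\sum_{l\ge1}\pr[Z_\nx=l]\,\pr[T_{0,\nx-1}=n-l\nx]/\pr[T_{0,n}=n]$, show the $l\ge2$ terms are negligible, and treat the $l=1$ term exactly as in Theorem~\ref{largest component theorem}, carrying $\th_\nx$ along as a prefactor; the identification of the limit and the QLC2 rate are then handled in the same way as the paper does.

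Two steps, however, are asserted rather than proved, and your stated reasons are not the right ones. First, the claim that the $l\ge2$ terms are smaller ``by a factor $O(\pr[Z_\nx=1])=O(1/n)$'' is unjustified and not correct as stated: what is true, and what the paper actually uses, is the bound coming from the decomposition $Z_i=\sum_{j=1}^{r_i}Z_{ij}$, namely $\sum_{l\ge2}\pr[Z_\nx=l]\le r_\nx\pr[Z_{\nx,1}\ge2]+\binom{r_\nx}{2}(\pr[Z_{\nx,1}=1])^2\le \m_\nx/\nx+(\th^*)^2/(2\nx^2)$, so the relevant factor is $O(\m_\nx+n^{-1})$, which is $o(1)$ under QLC and a power of $n^{-1}$ under QLC2 --- sufficient, but only because of the $\m_i$-structure, which your sketch never invokes. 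Second, you misplace the role of the hypothesis that $1/x$ is not an integer: for $1/2<x<1$ it is automatic, and for the $l=1$ term it only excludes $x=1$; its essential use is for $x\le1/2$, where it guarantees $1-x\lfloor 1/x\rfloor>0$ and hence that every target $n-l\nx$, $2\le l\le\lfloor n/\nx\rfloor$, stays of order $n$, so that Theorem~\ref{local dickman approx theorem} gives \emph{uniform} boundedness of the ratios $\pr[T_{0,\nx-1}=n-l\nx]/\pr[T_{0,n}=n]$. Without this the argument genuinely breaks down: as the paper notes after the theorem, for $x=1/2$ and $n$ even the $l=2$ term contributes $O(\e_{n/2,2}\,n^{1-\th})$, which need not vanish when $\th<1$.
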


\begin{proof}
 Arguing as in the proof of the previous theorem,
\eq\label{AB-local1}
   \pr[L\un = \nx] \Eq \prod_{i=\lfloor nx \rfloor+1}^n \! \bigpp{Z_i=0} \: 
           \sum_{l=1}^{\lfloor n/\nx \rfloor} \pr[Z_{\nx} = l]
              \frac{\bigpp{T_{0,\lfloor nx\rfloor-1}=n-l\nx}}{\bigpp{T_{0,n}=n}}.
\en
Now, from Theorem~\ref{local dickman approx theorem}, the ratios
\[
    \frac{\bigpp{T_{0,\lfloor nx\rfloor-1}=n-l\nx}}{\bigpp{T_{0,n}=n}}
\]
are bounded as $\nti$, uniformly for all $2\le l \le \lfloor n/\nx \rfloor$, 
provided that $1/x$ is not an integer, so that $1 - x\lfloor 1/x \rfloor> 0$.  Then
\eqs
   \sum_{l\ge2} \pr[Z_\nx = l] &\le& r_\nx\pr[Z_{\nx,1} \ge 2]
             + {r_\nx \choose 2}(\pr[Z_{\nx,1} = 1])^2 \\
   &\le& \frac1\nx \m_\nx + \frac{(\th^*)^2}{2\nx^2},
\ens
implying that $\lnti n\sum_{l\ge2} \pr[Z_\nx = l] = 0$.  Hence the sum of the terms
for $l\ge2$ on the right hand side of~\Ref{AB-local1} contributes an asymptotically
negligible amount to the quantity $n\pr[L\un = \nx]$ as $\nti$.  For the $l=1$
term in~\Ref{AB-local1}, both the product and
the ratio of point probabilities are treated as in the proof of 
Theorem~\ref{largest component theorem}, giving the limit 
$x^{\th-1}p_\th((1-x)/x)/p_\th(1)$, and
\[
    |\pr[Z_\nx = 1] - \th_\nx/\nx| \Le \m_\nx/\nx,
\]
so that
\[
 \lnti |n\pr[L\un = \nx] - x^{\th-1}\{p_\th((1-x)/x)/p_\th(1)\}\,x^{-1}\th_\nx| \Eq 0.
\]
This completes the proof of~\Ref{AB-local0}.  The remaining statement follows as
usual, by taking greater care of the magnitudes of the errors in the various
approximation steps.
\end{proof}

In order to relax the condition that $1/x$ is not integral, it is necessary to
strengthen the assumptions a little; for example, if $x=1/2$ and~$n$ is even, 
the contribution from the $l=2$ term in~\Ref{AB-local1} is of order
$O(\e_{n/2,2}n^{1-\th})$, which could be large for $\th<1$.  In order to get
a limit of $f_\th(x)$ without involving the individual values~$\th_i$, it is necessary 
to average the point probabilities over an interval of integers around~$\nx$,
of a length that grows with~$n$, but is itself of magnitude~$o(n)$.

\subsection{The spectrum of small components}

We prove an analogue of the Kublius fundamental lemma \cite{kubilius:64} for
quasi-logarithmic structures, and thus extend results of \ocite{ast:95} and [ABT, Theorem 7.7];
see also the corresponding result of Manstavi\v cius~(2009), proved under different
conditions.

\begin{theorem}\label{kubilius fundamental theorem}
For $a/n \le \a_0$, where~$\a_0$ is small enough that $\min_{m\ge1}\e'_5(n,a,m) \le \half\ddens(1)$,
we have
\begin{equation}\label{kubilius fundamental ineq}
  \bigdtv{\law{\cn_1,\dotsc,\cn_a}}{\law{Z_1,\dotsc,Z_a}}
   \ \leq\ \e_6(n,a) \, ,
\end{equation}
where the order of magnitude of $\e_6(n,a)$ is given in~\Ref{AB-small-1} below.

If QLC holds, then
\begin{equation*}
   \lim_{n \to \infty} \bigdtv{\law{\cn_1,\dotsc,\cn_{a_n}}}{\law{Z_1,\dotsc,Z_{a_n}}} \Eq 0
\end{equation*}
for every non-negative integer sequence $a_n=\lo{n}$.  If QLC2 holds, the convergence
rate is of order $\{(a+1)/n\}^{\h_5}$ for some~$\h_5 > 0$.
\end{theorem}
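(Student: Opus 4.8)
The plan is to follow the standard route used for the Kubilius fundamental lemma in [ABT], but with the local Dickman estimate of Theorem~\ref{local dickman approx theorem} supplying the key input in place of the corresponding logarithmic estimate. The starting point is the Conditioning Relation~\Ref{AB-cond-rel}, which gives the exact identity
\[
  \bigdtv{\law{\cn_1,\dotsc,\cn_a}}{\law{Z_1,\dotsc,Z_a}}
   \Eq \ex\Blm 1 - \frac{\pr[T_{a,n}=n - T_{0,a}]}{\pr[T_{0,n}=n]} \Brm^{+}
\]
(or, more conveniently, one bounds the total variation distance by
$\ex\bigl|\,1 - \pr[T_{a,n}=n-T_{0,a}]/\pr[T_{0,n}=n]\,\bigr|$, where the expectation is over the independent block $T_{0,a}=\sum_{i=1}^a iZ_i$). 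Thus everything reduces to showing that, for the relevant range of values $s = n - T_{0,a}$, the ratio $\pr[T_{a,n}=s]/\pr[T_{0,n}=n]$ is close to~$1$.

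First I would record that $T_{0,a} \le a T_{0,a}/1 \le \ldots$; more precisely $\ex T_{0,a} = \sum_{i=1}^a \th_i \le \th^* a$, and $\pr[T_{0,a} > n/2]$ is exponentially small in $n/a$ by a crude Markov/Chernoff bound, so the contribution of that event to the total variation distance is negligible (bounded by a power of $(a+1)/n$). On the complementary event we have $s = n - T_{0,a}$ with $n/2 \le s \le n$ and $s \ge 2a+1$ for $a/n$ small, so Theorem~\ref{local dickman approx theorem}, and in particular the uniform bound~\Ref{AB-eps5'} via~\Ref{AB-eps5}, applies both to $\pr[T_{a,n}=s]$ and (with $a=0$) to $\pr[T_{0,n}=n]$. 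Writing $x=s/n$, this gives
\[
  n\pr[T_{a,n}=s] \Eq \ddens(x) + O(\e'_5(n,a,m)), \qquad
  n\pr[T_{0,n}=n] \Eq \ddens(1) + O(\e'_5(n,0,m)),
\]
uniformly in the admissible range of~$s$. Since $\ddens$ is continuous and bounded away from $0$ on compact subsets of $(0,1]$, and $\ddens(1) > 0$, the hypothesis $\min_m \e'_5(n,a,m) \le \half \ddens(1)$ keeps the denominator bounded away from zero; dividing, and using $|x-1| = T_{0,a}/n \le \bo{\m}$-type control plus the modulus of continuity of $\ddens$ near~$1$, one obtains
\[
  \Blm \frac{\pr[T_{a,n}=s]}{\pr[T_{0,n}=n]} - 1 \Brm
    \Le \e_6(n,a) \Def C\Blb \min_{1\le m\le n}\e'_5(n,a,m) + \min_{1\le m\le n}\e'_5(n,0,m)
        + \ex\{T_{0,a}\}/n \Brb,
\]
plus the exponentially small term from $\{T_{0,a} > n/2\}$; taking expectation over $T_{0,a}$ finishes the bound~\Ref{kubilius fundamental ineq}, with the order of magnitude of~$\e_6(n,a)$ given by~\Ref{AB-small-1} (collecting the $\e_1$, $\e_2$, $\e_4$ and $\th^*a/n$ contributions hidden inside $\e'_5$ together with the modulus-of-continuity term for~$\ddens$ at~$1$).

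The two asymptotic assertions then follow mechanically. Under QLC, $D^1(T_{a,n}) = \bo{\{(a+1)/n\}^\g}$ by Theorem~\ref{AB-dss1}, and choosing $m=m_n\to\infty$ slowly enough makes each of $\e_1(n,a_n,m_n)$, $\e_2(n,a_n,m_n)$, $\e_4(n,a_n,m_n) \to 0$, hence $\e_6(n,a_n)\to0$ for every $a_n = \lo n$. Under QLC2, $D^1(T_{a,n}) = \bo{\{(a+1)/n\}^\g}$ with the stronger polynomial control on $\m_i$ and $\tthnm$, so choosing $m$ to be a suitable power of $\{(a+1)/n\}$ makes $\e_6(n,a) = \bo{\{(a+1)/n\}^{\h_5}}$ for some $\h_5 > 0$. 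The main obstacle I anticipate is purely bookkeeping: verifying that the hypotheses of Theorem~\ref{local dickman approx theorem} (the constraint $s \ge 2a+1$, and keeping $s$ in a compact subinterval of $(0,1]n$ on which $\ddens$ is bounded below) really are met for all $s = n - T_{0,a}$ outside an exponentially negligible set, and propagating the uniform estimate~\Ref{AB-eps5'} rather than the $r$-dependent~\Ref{AB-eps5} — together with checking that the continuity correction $|\ddens(s/n) - \ddens(1)|$ contributes only at the same order as the other error terms. None of this is deep, but it is where the constants in~\Ref{AB-small-1} are actually pinned down.
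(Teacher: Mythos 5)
Your argument is correct, and it differs from the paper's proof in one structural respect. Both start from the same exact identity coming from the Conditioning Relation, bound the tail event $\{T_{0,a}>n/2\}$ by Markov's inequality (contributing $2\th^* a/n$), and feed in the uniform local estimate~\Ref{AB-eps5'} of Theorem~\ref{local dickman approx theorem} together with the Lipschitz continuity of $\ddens$ on $[1/2,1]$. But where you compare $n\pr[T_{a,n}=n-k]$ \emph{directly} with $n\pr[T_{0,n}=n]$, using the local theorem twice (once with $a$, once with $a=0$, at $r=n$), the paper instead writes $\pr[T_{0,n}=n]=\sum_l\pr[T_{0,a}=l]\,p_{n-l}$ and compares $p_{n-l}$ with $p_{n-k}$ for $k,l\le n/2$; this symmetrization produces the cross term in~\Ref{AB-dtv-1} with $l>n/2$, which the paper has to control separately via \Ref{AB-dtv-4}--\Ref{AB-dtv-5}, and which is responsible for the extra summand $\min\{\min_m\e_2(a,0,m),\,aD^1(T_{a,n})\}$ in~\Ref{AB-small-1}. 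Your route avoids that term altogether and gives a bound of order $a/n+\min_m\e'_5(n,a,m)+\min_m\e'_5(n,0,m)$, which is within (indeed formally smaller than) the order stated in~\Ref{AB-small-1}, so the theorem as stated follows; the QLC/QLC2 conclusions then go through exactly as you say. Two small points to tighten. First, your remark that $\pr[T_{0,a}>n/2]$ is ``exponentially small by a Chernoff bound'' is not justified under the paper's hypotheses, which control only first-moment quantities ($\th_i$, $\m_i$), not exponential moments of the $Z_i$; Markov's inequality is all that is available, and it gives $O(a/n)$, which is exactly what you need (and what your parenthetical actually claims), so nothing breaks. Second, to get the denominator lower bound $n\pr[T_{0,n}=n]\ge\half\ddens(1)$ from the stated hypothesis $\min_m\e'_5(n,a,m)\le\half\ddens(1)$, you should note the monotonicity $\e'_5(n,0,m)\le\e'_5(n,a,m)$, which follows because every $a$-dependence in $\e_1,\e_2$ enters through $D^1(T_{\cdot,n})$ or an additive $a/n$ term, and $D^1(T_{0,n})\le D^1(T_{a,n})$ since $T_{0,n}=T_{0,a}+T_{a,n}$ with independent summands; the paper uses the same fact implicitly when it asserts that $n\pr[T_{0,n}=n]$ is bounded below under this hypothesis. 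Also verify, as you anticipate, that $a/n\le\a_0$ small enough guarantees $n-k\ge n/2\ge 2a+1$ on the good event, so that \Ref{AB-eps5'} applies to every $r=n-T_{0,a}$ encountered.
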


\begin{proof}
The proof is similar to that of [ABT, Theorem~5.2]. We fix an~$n$ with 
the required properties, and we set
$p_k := \pp{T_{a,n}=k}$. Then the conditioning relation entails
\begin{equation}
 \begin{split}
    & \bigdtv{\law{\cn_1,\dotsc,\cn_a}}{\law{Z_1,\dotsc,Z_a}}\\
    & \quad \Eq \sum_{k=1}^n \pp{T_{0,a}=k} \frac{\bigl(\pp{T_{0,n}=n}-p_{n-k}\bigr)^+}{\pp{T_{0,n}=n}}\\
    & \quad \Le \sum_{k =0}^{\lfloor n/2 \rfloor} \sum_{l=0}^{\lfloor n/2 \rfloor} 
         \pp{T_{0,a}=k} \pp{T_{0,a}=l} \frac{\bigl(p_{n-l}-p_{n-k}\bigr)^+}{\pp{T_{0,n}=n}}\\
    & \qquad \mbox{} + \sum_{k =0}^{\lfloor n/2 \rfloor} \sum_{l=\lfloor n/2 \rfloor+1}^n 
         \pp{T_{0,a}=k} \pp{T_{0,a}=l} \frac{p_{n-l}}{\pp{T_{0,n}=n}} 
       + \sum_{k=\lfloor n/2 \rfloor+1}^n \pp{T_{0,a}=k} \, .
\end{split}\label{AB-dtv-1}
\end{equation}
We now separately bound the three terms in~\Ref{AB-dtv-1}. 

The first term is just
\begin{equation}\label{a1 ineq 1}
    \sum_{0\leq k < l\leq n/2} \pp{T_{0,a}=k} \pp{T_{0,a}=l}
      \frac{\bigeuknorm{p_{n-l}-p_{n-k}}}{\pp{T_{0,n}=n}} \, .
\end{equation}
Now, from Theorem~\ref{local dickman approx theorem}, using the bound given in~\Ref{AB-eps5'},
we have
\eq\label{AB-dtv-2}
   | n p_{n-r} - \ddens(1-r/n) | \Le  \e'_5(n,a,m) \, ,\quad 0 \le r \le n/2,
\en
so that, in~\Ref{a1 ineq 1},
\[
   n|p_{n-l}-p_{n-k}| \Le |\ddens(1-l/n) - \ddens(1-k/n)| + 2\e_5'(n,a,m)
     \Le c_1(\th)n^{-1}|k-l| + 2\e_5'(n,a,m),
\]
for any choice of~$m$ and for some constant~$c_1(\th)$.  Since also, from
Theorem~\ref{local dickman approx theorem}, $n\pr[T_{0,n}=n]$ is uniformly
bounded below whenever $\e_5'(n,a,m) \le \half\ddens(1)$, it follows that the
first term in~\Ref{AB-dtv-1} is of order 
\eq\label{AB-dtv-3}
   O\bigl(n^{-1}\ex T_{0,a} +  \min_{m\ge1}\e_5'(n,a,m)\bigr) 
      \Eq O\bigl(n^{-1}a + \min_{m\ge1}\e_5'(n,a,m)\bigr).
\en

For the second term in~\Ref{AB-dtv-1}, we have two bounds.
First,
\begin{equation*}
  \sum_{l=\lfloor n/2 \rfloor + 1}^n \pp{T_{0,a}=l} \frac{\pp{T_{a,n}=n-l}}{\pp{T_{0,n}=n}} \Le
     \frac{n \max_{n/2 \le l \le n}\pp{T_{0,a}=l}}{n\pp{T_{0,n}=n}},
\end{equation*}
where the denominator is uniformly bounded below whenever $\e_5'(n,a,m) \le \half\ddens(1)$,
and, for $n/2 \le l \le n$ and $a\le n/4$,
\eq\label{AB-dtv-4}
   n\pr[T_{0,a} = l] \Le 2l\pr[T_{0,a}=l] \Le 2\th^* \pr[T_{0,a} \ge n/4] + 2\e_2(a,0,m) 
     \Le 8(\th^*)^2 (a/n) + 2\e_2(a,0,m)\,,
\en 
from~\Ref{AB-1st-bnd}, with the last step following because $\ex T_{0,a} \le a\th^*$.
The second bound is given by
\eq\label{AB-dtv-5}
 \begin{split}
   \sum_{l=\lfloor n/2 \rfloor + 1}^n \pp{T_{0,a}=l} \frac{\pp{T_{a,n}=n-l}}{\pp{T_{0,n}=n}}
    & \Le \pp{T_{0,a} \geq n/2} \frac{\sup\nolimits_{j \in \zzp}\pp{T_{a,n}=j}}{\pp{T_{0,n}=n}}\\
    & \leq \frac{2a \th^* D^1(T_{a,n})}{n\pp{T_{0,n}=n}} \, ,
 \end{split}
\en
again using Markov's inequality, and the asymptotically important part is $aD^1(T_{a,n})$.
Thus the second term in~\Ref{AB-dtv-1} is of order
\eq\label{AB-dtv-6}
   O\bigl(n^{-1}a +  \min\{\min_{m\ge1}\e_2(a,0,m),aD^1(T_{a,n})\}\bigr).
\en
Finally, the third term in~\Ref{AB-dtv-1} can be simply bounded from above by
\begin{equation}\label{AB-dtv-7}
   2 n^{-1}\ee T_{0,a}  \Le 2\th^*  n^{-1}a \, .
\end{equation}
Combining \Ref{AB-dtv-3}, \Ref{AB-dtv-6} and~\Ref{AB-dtv-7} proves the first part of the theorem,
with
\eq\label{AB-small-1}
    \e_6(n,a) \Eq O\bigl(n^{-1}a + \min_{m\ge1}\e_5'(n,a,m) 
        + \min\{\min_{m\ge1}\e_2(a,0,m),aD^1(T_{a,n})\}\bigr).
\en
The remaining statements follow as usual.
\end{proof}

\subsection{Additive arithmetic semigroups}
We now return to the example given in the Introduction, of a quasi-logarithmic 
combinatorial structure that is not logarithmic.  
In Knopfmacher's~(1979) additive arithmetic semigroups,
the elements of norm~$n$ can be decomposed into prime elements,
with $C_i\un$ the number having norm~$i$. The joint distribution
of $(C_1\un,\ldots,C_n\un)$ satisfies the conditioning relation, with
$Z_i \sim \nb(p(i),q^{-i})$, so that
\begin{equation*}
    \bigpp{Z_i=k} := \binom{p(i)+k-1}{k}q^{-ik} (1-q^{-i})^n \,,\quad k \in \zzp,
\end{equation*}
with the convention that $Z_i=0$ if $p(i)=0$.  Here, $p(i)$ denotes the number of
prime elements of norm~$i$, and~$q>1$ enters through the assumption that the number~$g(n)$ of
elements of size~$n$ satisfies
\begin{equation}\label{beurling condition}
    g(n) = q^n \sum_{j=1}^r c_j \: \! n^{\rho_j-1} + \bigbo{q^n n^{-\gamma}} \, ,
\end{equation}
for real numbers $\rho_1<\dotsb < \rho_r$ and $c_1, \dotsc, c_r$, with $\rho_r>0$, $c_r>0$,
and with $\gamma>1$, an analogue of a condition under which \ocite{beurling:37}
examined prime number theorems of so called generalized integers. 
In particular, \ocite{zhang:96}*{Theorem 6.2} shows,
under condition \eqref{beurling condition} with $\gamma>2$, that 
$\th_i := i\ex Z_i = ip(i)q^{-i}/(1-q^{-i})=\theta'_i+\lo{1}$, where 
$\set{\theta'_i}_{i \in \nn}$ 
is the integer skeleton of a sinusoidal mixture function 
\begin{equation}
   \theta'_t := \theta + \sum_{l=1}^L \lambda_l \cos (2 \pi f_l t - \varphi_l) \, , 
      \qquad t \in \rr \, ,
\end{equation}
with $\theta:=\rho_r>0$, amplitudes $\lambda_l>0$ such that $\sum_{l=1}^L \lambda_l \leq \theta$, 
 non-integral
frequencies $f_l \in [0,\infty) \setminus \zzp$ and phases $0 \leq \varphi_l < 2 \pi$.
In many examples, the sum of cosines is empty $(L=0)$, and the structure logarithmic.
When this is the case, the asymptotic behaviour of the small and large components is as
described in the Introduction: see \ocite{abt:05} for these and other results.  
Here, we are interested in establishing asymptotics in the case when $L\ge1$.

First, note that the same sequence~$\th'_i$, for integral~$i$, is obtained, if each~$f_l$ 
is replaced by its
fractional part $f_l - \lfloor f_l \rfloor$, so that the values of~$f_l$ can be taken to
lie in $(0,1)$; and then that, if~$f_l > 1/2$, it can first be replaced by $f_l - 1$, 
and then by $1 - f_l$ if also $\f_l$ is replaced by~$-\f_l$, again without changing
the~$\th'_i$. Hence we may assume that $f_l \in (0,1/2]$ for all~$l$.

Clearly, for $L\ge1$, the sequence~$\th'_i$ is in general not convergent in the usual sense, but,
in view of the properties of trigonometric functions, 
\[
   \Blb \sum_{i=i_1+1}^{i_2} \cos(2 \pi f_l t - \varphi_l) \Brb \Le \frac1{\sin \pi f_l},
\]
whatever the values of~$i_1,i_2$, so that $\tthnm = O(m^{-1}) \to 0$ as $m \to \infty$; and
$\m_i = O(q^{-i})$ as $i\to\infty$.  Hence the condition QLC2 is satisfied by these
structures if, for some $r,s$ coprime, $k\in\nn$ and $\ps > 0$,
the set $\{i \colon\, \min(\th_i,\th_{i+d}) \ge 4\ps\}$ has at least one element in
each $k$-interval $\{jk+1,\ldots,(j+1)k\}$ for all~$j$ sufficiently large and for all
$d \in D := \{r\}\cup\{s2^g,\,g\in\zzp\}$.

Now, if $\sum_{l=1}^L \l_l < \th$,
all the~$\th'_i$ are uniformly bounded below by $\ps_1 := \th - \sum_{l=1}^L \l_l > 0$,
and the condition QLC2 is clearly satisfied with any choice of $r,s$ and~$k$ if $\ps := \ps_1/8$,
because then all~$i$ sufficiently large are such that $\th_i \ge 4\ps$.

If $\sum_{l=1}^L \l_l = \th$, define 
\[
   V_l(i) \Def \min_{n \in \zz}|2 \pi f_l i - \varphi_l - (2n+1)\pi|,
\]
and observe that, if 
\[
    V_l(i) \Le \d_l \Def \pi\min\{f_l,1-2f_l\},
\]
then $|V_l(i+1)|$ and $|V_l(i+1)|$ are both at least~$\d_l$. Setting
\[
   L_i \Def \{1\le l\le L\colon\, V_l(i) \ge \d_l\},
\]
it also follows from the inequality $1 - \cos x \ge c x^2$ in $|x| \le \pi/3$,
with $2c = 1 - \pi^2/108$, that
\[
    \th'_i \ \ge\ c\sum_{l\in L_i} \l_l \d_l^2 \ =:\ 2\ps_2.
\]
If, for some~$i$, $\th'_i \le \ps_2$, it follows from the above considerations
that $\th'_{i+1}$ and $\th'_{i+2}$ are both at least~$\ps_2$, and hence every
interval of length $k=3$ far enough from the origin contains an index~$i$ 
with $\min(\th_i,\th_{i+d}) \ge 4\ps$, if $\ps := \ps_2/8 $, whatever the value of~$d$.  
Thus the condition
QLC2 is then satisfied for any choice of $r,s$ co-prime, provided that $\ps_2 > 0$.

There remains the possibility that $f_l = 1/2$ for all $1\le l\le L$, in which
case~$\ps_2=0$.  If any of the~$\f_l$ are not multiples of~$\pi$, the function~$\th'_i$
is once again uniformly bounded away from~$0$, and the same is true if one is an
even multiple of~$\pi$ and another an odd one.  Hence there are only two cases in which QLC2 is
not satisfied:
\[
   \th'_t \Def \th\{1 + \cos(\pi t)\} \quad\mbox{and}\quad \th'_t \Def \th\{1 + \cos(\pi (t-1))\}.
\]
In the former case, $\th'_i > 0$ only for {\it even\/}~$i$, and if $\pr[Z_i = 0] = 1$
for all odd~$i$ then $\dtv{\law{T_{0,n}}}{\law{T_{0,n}+1}} = 1$ for all~$n$; hence,
for instance, Theorem~\ref{local dickman approx theorem} cannot be expected to be true.
In the latter case, we can take $D := \{2^{g+1},\,g\ge0\}$, and use Theorem~\ref{AB-extra}
to show that $\dtv{\law{T_{a,n}}}{\law{T_{a,n}+1}} = O(\{(a+1)/n\}^\h)$ for some~$\h > 0$;
the rest of the argument is then as before.

\section{Technical bounds}
Here we collect some technical results that are needed to smooth out the
irregularities in the sequence of~$\th_i$'s. Let $\theta>0$, and let
$\set{\theta_i}_{i \in \nn}$ be any non-negative sequence. Let 
$\ths := \sup\nolimits_{i \in \nn} \theta_i$ and
$\thd :=\theta \vee \ths$. For every $m,n \in \nn$ we set
\begin{equation*}
    \tthnm := \sup_{j\ge0} 
     \: \Bigeuknorm{\frac{1}{m} \sum_{i=1}^{m} \theta_{j m + i} - \theta} \, .
\end{equation*}

\begin{lemma}\label{technical heart lemma}
Let $\set{y_i}_{i \in \nn}$ be a real-valued sequence, and $0 \le l < n$. Then
\eqs
  && \Blm \siln(\th_i-\th)y_i \Brm \Le \Blb
      \begin{array}{l}
         (2m\thd + n\tthnm)\|y\| + \ths(nm/8)\|\D y\|;\\[1ex]
         2m\thd\|y\| + \tthnm\sn|y_i| \\[1ex]
          \qquad\mbox{} + 
         m^{-1}{\ths} \sum_{l=1}^m \sum_{l'=1}^m \sum_{j=1}^{\lfloor n/m \rfloor - 1}
            |y_{jm+l} - y_{jm+l'}|,
      \end{array} \right.
\ens
where $\|y\| := \max_{l<i\le n}|y_i|$ and $\|\D y\| := \max_{l < i < n}|y_{i+1}-y_i|$.
\end{lemma}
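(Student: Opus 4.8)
The plan is to prove Lemma~\ref{technical heart lemma} by reducing the weighted sum over a general interval to a sum of \emph{block averages}, where each block has length~$m$, and then exploiting the definition of~$\tthnm$ block by block. First I would reindex: write $\siln (\th_i - \th) y_i$ and group the indices $\{l+1,\dots,n\}$ into consecutive blocks of the form $\{jm+1,\dots,(j+1)m\}$, accepting an incomplete block at each end. Each incomplete block contributes at most $m$ terms, and since $|\th_i - \th| \le \ths + \th \le 2\thd$ (crudely $\th_i \le \ths \le \thd$ and $\th \le \thd$), the two partial blocks together contribute at most $2m\thd \|y\|$; this accounts for the first summand $2m\thd\|y\|$ that appears in both bounds.

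For a complete block $B_j := \{jm+1,\dots,(j+1)m\}$, I would write $\sum_{i\in B_j}(\th_i-\th)y_i$. The key identity is to replace $y_i$ on this block by the block average $\bar y^{(j)} := \frac1m\sum_{i\in B_j} y_i$, writing
\[
  \sum_{i\in B_j}(\th_i-\th)y_i \Eq \Bl\sum_{i\in B_j}(\th_i-\th)\Br \bar y^{(j)} + \sum_{i\in B_j}(\th_i-\th)(y_i - \bar y^{(j)}).
\]
The first piece is controlled by the definition of~$\tthnm$: $\bigeuknorm{\sum_{i\in B_j}(\th_i-\th)} \le m\tthnm$. Summing over the (at most $n/m$) complete blocks gives the $n\tthnm\|y\|$ term (for the first bound) or, being more careful and not bounding $|\bar y^{(j)}|$ by $\|y\|$, the term $\tthnm\sn|y_i|$ (for the second bound, using $|\bar y^{(j)}|\le \frac1m\sum_{i\in B_j}|y_i|$ and summing). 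The second piece is where the oscillation of~$y$ enters: bound $|\th_i - \th| \le \ths$ (here one can afford the cruder bound, or keep $\thd$; the statement uses $\ths$) and estimate $\sum_{i\in B_j}|y_i - \bar y^{(j)}|$. For the second displayed bound this is immediate: $|y_i - \bar y^{(j)}| \le \frac1m\sum_{i'\in B_j}|y_i - y_{i'}|$, so $\sum_{i\in B_j}|y_i-\bar y^{(j)}| \le \frac1m\sum_{i,i'\in B_j}|y_i - y_{i'}|$, and summing over complete blocks $j$ (reindexed so the inner double sum runs over $l,l'\in\{1,\dots,m\}$ and $j$ over the relevant range up to $\lfloor n/m\rfloor - 1$) yields exactly the third summand $m^{-1}\ths\sum_{l,l',j}|y_{jm+l}-y_{jm+l'}|$.

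For the first displayed bound I would instead telescope: within a block, $|y_i - \bar y^{(j)}| \le \sum_{i'\in B_j}\frac1m|y_i - y_{i'}| \le \sum_{i'\in B_j}\frac{|i-i'|}{m}\|\D y\| \le \frac{m}{?}\|\D y\|$, and more precisely $\sum_{i\in B_j}|y_i - \bar y^{(j)}| \le \frac1m\sum_{i,i'\in B_j}|i-i'|\,\|\D y\|$. The double sum $\sum_{i,i'\in B_j}|i-i'|$ over a block of length~$m$ equals $\frac{m(m^2-1)}{3}$, roughly $m^3/3$, but one wants the sharper constant $m^2/8$ after dividing by~$m$ and summing; in fact the cleanest route is to note $\sum_{i\in B_j}|y_i-\bar y^{(j)}| \le \frac{m^2}{8}\|\D y\|$ directly, since a function on $m$ equally spaced points with increments bounded by $\|\D y\|$ has total deviation from its mean at most $\frac{m^2}{8}\|\D y\|$ (extremal configuration: linear, giving $\approx m^2/8$ after summing the arithmetic progression $\sum_{i}|i - \frac{m+1}{2}|\le m^2/4$ and halving — I would verify the exact constant $1/8$ arises from $\sum_{i=1}^m |i-(m+1)/2| \le m^2/4$ divided appropriately). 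Summing over the at most $n/m$ complete blocks multiplies by $n/m$, producing $\ths (nm/8)\|\D y\|$.

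The main obstacle I expect is pinning down the sharp constant $1/8$ in the first bound: it requires recognizing that $\frac1m\sum_{i\in B_j}|y_i - \bar y^{(j)}|$ is maximized (given the increment constraint $\|\D y\|$) by the arithmetic-progression configuration and evaluating $\frac1m\sum_{i=1}^m\bigl|i - \frac{m+1}{2}\bigr|$, which is $\frac{m^2-1}{4m}$ for odd~$m$ and $\frac{m}{4}$ for even~$m$ — i.e.\ at most $m/4$ — and then the extra factor $\frac12$ comes from a more careful averaging argument (comparing $y_i$ to a telescoped sum rather than to the worst $i'$). Everything else is bookkeeping: tracking that the partial end-blocks give $2m\thd\|y\|$, that the complete blocks number at most $\lfloor n/m\rfloor$, and that the second bound simply refrains from the cruder estimates $|\bar y^{(j)}|\le\|y\|$ and $|y_i-y_{i'}|\le|i-i'|\|\D y\|$, keeping instead the raw quantities $\sum_i|y_i|$ and $\sum_{l,l',j}|y_{jm+l}-y_{jm+l'}|$. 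I would present the two bounds in parallel, doing the block decomposition once and then branching at the point where $y_i - \bar y^{(j)}$ is estimated.
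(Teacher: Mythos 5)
Your block decomposition is essentially the paper's own argument: split $\{l+1,\dots,n\}$ into $m$-blocks $B_j := \{jm+1,\dots,(j+1)m\}$, discard the (at most two) boundary blocks crudely, and on each complete block separate a block-average term, controlled by $\tthnm$, from an oscillation term $\sum_{i\in B_j}|y_i-\by\uj|$; your treatment of the second displayed bound, via $|y_i-\by\uj|\le m^{-1}\sum_{i'\in B_j}|y_i-y_{i'}|$, is exactly the paper's. Two points need repair, though. First, your justification of the factor $\ths$ in the oscillation term --- ``bound $|\th_i-\th|\le\ths$'' --- is false whenever $\th>\ths$ (then $|\th_i-\th|$ can be as large as $\th=\thd$), and ``keeping $\thd$'' would prove a weaker statement than the one asserted. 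The repair is the mean-zero observation: since $\sum_{i\in B_j}(y_i-\by\uj)=0$, the constant $\th$ drops out of $\sum_{i\in B_j}(\th_i-\th)(y_i-\by\uj)$, leaving $\bigl|\sum_{i\in B_j}\th_i(y_i-\by\uj)\bigr|\le\ths\sum_{i\in B_j}|y_i-\by\uj|$; equivalently, the paper writes $(\th_i-\th)y_i=(\bth\uj-\th)y_i+(\th_i-\bth\uj)(y_i-\by\uj)$ and uses $|\th_i-\bth\uj|\le\ths$, both quantities lying in $[0,\ths]$. (Similarly, for the boundary blocks you need $|\th_i-\th|\le\thd$, which holds because $\th_i,\th\ge0$; your cruder bound $2\thd$ would only give $4m\thd\|y\|$.)

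Second, the constant $1/8$ that you rightly could not pin down is not attainable by any per-block averaging refinement, and your own computation shows why: for monotone $y$ with maximal increments, $\sum_{i\in B_j}|y_i-\by\uj|=\|\D y\|\sum_{i=1}^m|i-(m+1)/2|$, which equals $(m^2/4)\|\D y\|$ for even $m$; already $m=2$, $y=(0,1)$ gives $1>(m^2/8)\|\D y\|=1/2$. So the hoped-for ``extra factor $\tfrac12$ from a more careful averaging argument'' does not exist. In fact the paper's proof simply asserts the per-block bound $(m^2/8)\|\D y\|$ without justification, and that assertion is false as stated; the correct (and sharp) per-block bound is $(m^2/4)\|\D y\|$, which yields $\ths(nm/4)\|\D y\|$ in the first branch. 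Whether the stated $nm/8$ could be rescued by some global argument is unclear, but none is given in the paper, and the $nm/4$ version suffices for every later use: it only changes absolute constants (the $\tfrac14$ in Lemma~\ref{heartlemma}(i) becomes $\tfrac12$, and $\e_1$ in~\Ref{AB-eps1} changes accordingly). So prove the first bound with $nm/4$ rather than chasing the stated constant; with that adjustment and the mean-zero fix above, your argument is complete and coincides with the paper's.
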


\begin{proof}
For any $0 \le j \le \lfloor n/m \rfloor$, we have
\eqs
  \Blm \sum_{i=jm+1}^{(j+1)m} (\th_i-\th)y_i \Brm
   &=& \Blm \sum_{i=jm+1}^{(j+1)m}\{(\bth\uj - \th)y_i + (\th_i-\bth\uj)(y_i - \by\uj)\}\Brm \\
   &\le&  
     |\th-\bth\uj|\sum_{i=jm+1}^{(j+1)m}|y_i| + \ths \sum_{i=jm+1}^{(j+1)m} |y_i - \by\uj|,
\ens
where $\bth\uj := m^{-1}\sum_{i=jm+1}^{(j+1)m}\th_i$ and 
$\by\uj := m^{-1}\sum_{i=jm+1}^{(j+1)m} y_i$.
Note also that 
\[
    \sum_{i=jm+1}^{(j+1)m} |y_i - \by\uj| \Le \Blb
         \begin{array}{l}
               (m^2/8)\|\D y\|;\\[1ex]
               m^{-1}\sum_{l=1}^m \sum_{l'=1}^m |y_{jm+l} - y_{jm+l'}|.
         \end{array} \right.
\]
The lemma now follows by bounding the sum $\siln(\th_i-\th)y_i$ in $m$-blocks.
\end{proof}

\medskip
\begin{lemma}\label{heartlemma}
Let $T$ be an $\zzp$-valued random variable, and $0 \le l < n$, $m\ge1$. 

{\rm (i)} For every bounded function $g\colon\zzp \to \rr$, we have
\eq\label{heartlemma ineq 1}
   \biggeuknorm{\sum_{i=l+1}^n (\theta_i-\theta) \ee g(T + i)}
      \Le \supnorm{g} \biggl( 2 \thd m + n\tthnm +
          \frac14 \ths mn \: \bigdtv{\law{T}}{\law{T+1}}  \biggr) \, .
\en

{\rm (ii)} For every $k \in \nn$, we have
\begin{equation*}\label{heart coro ineq 2}
 \biggeuknorm{\sum_{i=l+1}^n(\theta_i-\theta) \pp{T+i=k}}
   \Le  \tthnm + m\Bl 2 \thd  + \frac16 \ths  m \Br \bigdtv{\law{T}}{\law{T+1}}  \, .
\end{equation*}
\end{lemma}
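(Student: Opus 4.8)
The plan is to obtain both parts of Lemma~\ref{heartlemma} directly from Lemma~\ref{technical heart lemma}, by substituting the natural choice of the sequence $(y_i)$ and then controlling the quantities $\supnorm{y}$, $\supnorm{\D y}$, $\sum_i\euknorm{y_i}$ and the within-block fluctuations in terms of $\bigdtv{\law{T}}{\law{T+1}}$.

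For part~(i) I would apply the first alternative of Lemma~\ref{technical heart lemma} with $y_i := \ee g(T+i)$. Then $\supnorm{y}\le\supnorm{g}$ is immediate, and for the increments I would write
\[
   \ee g(T+i+1) - \ee g(T+i) \Eq \ee h(T+1) - \ee h(T),\qquad h := g(\,\cdot\,+i),
\]
and use the elementary bound $\bigeuknorm{\ee h(T+1)-\ee h(T)}\le 2\supnorm{h}\,\bigdtv{\law{T}}{\law{T+1}}$, valid for every bounded~$h$ (integrate~$h$ against the signed measure $\law{T+1}-\law{T}$, whose total variation norm equals $2\,\bigdtv{\law{T}}{\law{T+1}}$). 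Hence $\supnorm{\D y}\le 2\supnorm{g}\,\bigdtv{\law{T}}{\law{T+1}}$, and feeding these two bounds into the first alternative of Lemma~\ref{technical heart lemma}---the factor $1/8$ there absorbing the~$2$ into the $1/4$---reproduces exactly~\Ref{heartlemma ineq 1}.

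For part~(ii) I would apply the second alternative of Lemma~\ref{technical heart lemma} with $y_i := \pp{T+i=k} = \pp{T=k-i}$; three ingredients are then needed. First, $\supnorm{y}\le\sup_{j}\pp{T=j}\le\bigdtv{\law{T}}{\law{T+1}}$, which follows from $\pp{T=j}=\bigeuknorm{\pp{T\le j}-\pp{T+1\le j}}$ (because $\set{T+1\le j}=\set{T\le j-1}$) and the definition of total variation distance. Secondly, $\sum_i\euknorm{y_i}=\sum_i\pp{T=k-i}\le 1$, a sum of probabilities of disjoint events, which handles the $\tthnm$-term. Thirdly, and this is the delicate point, one must bound the block sum $\sum_{j}\sum_{l=1}^{m}\sum_{l'=1}^{m}\bigeuknorm{y_{jm+l}-y_{jm+l'}}$ occurring in the second alternative. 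The naive per-pair bound $\bigeuknorm{y_{jm+l}-y_{jm+l'}}\le\euknorm{l-l'}\,\bigdtv{\law{T}}{\law{T+1}}$, summed over the $\asymp n/m$ blocks, would produce a spurious factor~$n$; instead one telescopes each $\bigeuknorm{y_{jm+l}-y_{jm+l'}}$ through the consecutive differences $\euknorm{y_{jm+t}-y_{jm+t+1}}$ --- which involve only index pairs lying strictly inside a block, never straddling a block boundary --- and invokes the global identity
\[
   \sum_i\euknorm{y_i-y_{i+1}} \Eq \sum_j\euknorm{\pp{T=j}-\pp{T=j+1}} \Eq 2\,\bigdtv{\law{T}}{\law{T+1}},
\]
valid because the set attaining $\sup_A\euknorm{\pp{T\in A}-\pp{T\in A-1}}$ is $\set{j\colon\pp{T=j}>\pp{T=j-1}}$. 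Each consecutive difference thus enters the block sum with a bounded combinatorial weight $t(m-t)$, so that $\sum_j\sum_{l,l'}\bigeuknorm{y_{jm+l}-y_{jm+l'}}$ is controlled by a fixed multiple of $m^{2}\,\bigdtv{\law{T}}{\law{T+1}}$; inserting the three estimates into Lemma~\ref{technical heart lemma} and collecting terms then gives the bound asserted in~(ii).

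The only real obstacle is this third ingredient: one must set up the telescoping and the block bookkeeping so that the estimate stays proportional to a power of~$m$ times $\bigdtv{\law{T}}{\law{T+1}}$ rather than accumulating an extra factor~$n$, the identity expressing $\sum_j\euknorm{\pp{T=j}-\pp{T=j+1}}$ through $\bigdtv{\law{T}}{\law{T+1}}$ being the fact that makes this possible. All the other steps amount to a direct substitution into Lemma~\ref{technical heart lemma}.
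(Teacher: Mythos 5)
Your proposal is correct and is essentially the paper's own proof: both parts are obtained by substituting $y_i:=\ee g(T+i)$, respectively $y_i:=\pp{T+i=k}$, into Lemma~\ref{technical heart lemma}, with $\supnorm{\D y}\le 2\supnorm{g}\,\bigdtv{\law{T}}{\law{T+1}}$ for (i), and with $\sup_j\pp{T=j}\le\bigdtv{\law{T}}{\law{T+1}}$ and $\sum_i\pp{T=k-i}\le1$ for (ii). The only divergence is the block-fluctuation term in (ii), where the paper bounds $\sum_j\bigeuknorm{y_{jm+l}-y_{jm+l'}}\le\euknorm{l-l'}\,\bigdtv{\law{T}}{\law{T+1}}$ for each fixed pair $(l,l')$ (the same disjoint-window telescoping you describe) and then sums over pairs, whereas your weighted telescoping via $\sum_i\euknorm{y_i-y_{i+1}}\le 2\,\bigdtv{\law{T}}{\law{T+1}}$ yields after the $m^{-1}$ prefactor a term of order $m\,\ths\,\bigdtv{\law{T}}{\law{T+1}}$ --- actually sharper in $m$ than the stated $\tfrac16\ths m^2$ term, and reproducing the stated constant only up to a bounded factor for small $m$, which is immaterial for every application of the lemma (only the order in $m$ and the presence of the total variation factor are used).
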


\begin{proof}
(i) We apply the first inequality in Lemma \ref{technical heart lemma}, 
with $y_i:=\ee g(T + i)$, noting that $\|\D y\| \le 2\bigdtv{\law{T}}{\law{T+1}}$. 

\sms
(ii) We apply the second inequality in Lemma \ref{technical heart lemma}, 
with $y_i:=\pp{X+i=k}$, and observe that then
\[
    \sup\nolimits_{j \in \zzp} \pp{T=j} \Le \bigdtv{\law{T}}{\law{T+1}},
\]
as for~\Ref{AB-point-bnd}, and that
\[
   \sum_{j=1}^{\lfloor n/m \rfloor - 1} |y_{jm+l} - y_{jm+l'}| \Le
       |l-l'|\bigdtv{\law{T}}{\law{T+1}}.
\]
\end{proof}

\medskip
\begin{lemma}\label{5.3}
If $0 < 2m \le l \le n$ , then 
\begin{equation*}
  \exp\biggeuknorm{\sum_{i=l+1}^n \frac{\theta_i - \theta}{i}}
     \Le \exp\{2m(1+\ths)/l\} \Bigl(\frac nl \Bigr)^{\tthnm} 
      \Le e^{1+\ths} \Bigl(\frac nl \Bigr)^{\tthnm}\,.
\end{equation*}
\end{lemma}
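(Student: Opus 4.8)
The plan is to bound $\bigl|\siln (\th_i-\th)/i\bigr|$ directly and then exponentiate, the point being that a block decomposition splits this sum into a ``mean'' part that telescopes to $\tthnm\log(n/l)$ and a ``fluctuation'' part of size roughly $\ths m/l$. Write $B_j:=\{jm+1,\dots,(j+1)m\}$ for the $m$-blocks that appear in the definition of $\tthnm$, put $j_*:=\lfloor l/m\rfloor$, and let $j^*$ be the index with $n\in B_{j^*}$. Then $\{l+1,\dots,n\}$ is covered by the blocks $B_j$, $j_*\le j\le j^*$, meeting them in at most one left partial block (a tail of $B_{j_*}$), the full blocks $B_j$ with $j_*<j<j^*$, and at most one right partial block (a head of $B_{j^*}$), all of whose indices exceed $l$.

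In every block $B_j$ met by the sum I write $\th_i-\th=(\bth\uj-\th)+(\th_i-\bth\uj)$ with $\bth\uj:=m^{-1}\sum_{i\in B_j}\th_i$. The mean pieces contribute $\sum_j(\bth\uj-\th)\sum_{i\in B_j,\,l<i\le n} i^{-1}$, where $|\bth\uj-\th|\le\tthnm$ by the definition of $\tthnm$ and where the nonnegative weights $\sum_{i\in B_j,\,l<i\le n}i^{-1}$ sum, over all the blocks, to exactly $\siln i^{-1}\le\log(n/l)$. Hence the mean part is at most $\tthnm\log(n/l)$, which exponentiates to precisely the factor $(n/l)^{\tthnm}$.

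For the fluctuation pieces $\sum_{i\in B_j,\,l<i\le n}(\th_i-\bth\uj) i^{-1}$: in a full block I exploit $\sum_{i\in B_j}(\th_i-\bth\uj)=0$ to subtract the constant $(jm+1)^{-1}$ from $i^{-1}$, so that the piece is bounded by $\ths\sum_{i\in B_j}\bigl|i^{-1}-(jm+1)^{-1}\bigr|\le \ths/j^2$ (using $|i^{-1}-(jm+1)^{-1}|\le m/(jm)^2$ for $i\in B_j$); summing over $j>j_*$ and using $j_*\ge l/(2m)$ — which is exactly where the hypothesis $2m\le l$ enters — bounds the full-block fluctuation by a term of order $\ths m/l$. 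In a partial end block there are fewer than $m$ indices, each larger than $l$, so $|\th_i-\bth\uj|\le\ths$ and $i^{-1}<1/l$ bound its fluctuation by $\ths m/l$ outright. Adding up, $\bigl|\siln (\th_i-\th)/i\bigr|\le \tthnm\log(n/l)+2m(1+\ths)/l$; exponentiating gives the first claimed inequality, and $2m\le l$ then yields the second.

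The routine ingredients are the elementary estimates $|i^{-1}-(jm+1)^{-1}|\le m/(jm)^2$, $\sum_{j>j_*}j^{-2}\le 1/j_*$, and $\siln i^{-1}\le\log(n/l)$. The step that requires care is the block bookkeeping: one must align the decomposition so that the mean weights telescope to \emph{exactly} $\siln i^{-1}$ (otherwise a spurious multiplicative constant would appear inside the logarithm) even though neither $l$ nor $n$ is assumed to be a multiple of $m$, and one must check that the two partial end blocks — which do \emph{not} enjoy the exact cancellation $\sum(\th_i-\bth\uj)=0$ — still fit within the $2m(1+\ths)/l$ budget, using there that $i>l$ and that $2m\le l$.
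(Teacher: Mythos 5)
Your argument is, in substance, the same as the paper's: the paper proves this lemma simply by applying the second inequality of Lemma \ref{technical heart lemma} with $y_i:=1/i$, and what you have written is a re-derivation of that inequality in this special case — the same split $\th_i-\th=(\bth\uj-\th)+(\th_i-\bth\uj)$ over $m$-blocks, the same $\sum_{j>j_*}j^{-2}\le 1/j_*$ estimate for the interior blocks, and the same use of $2m\le l$ via $\lfloor l/m\rfloor\ge l/(2m)$. Your treatment of the two edge blocks is in one respect slightly sharper than the paper's, since by keeping their $(\bth\uj-\th)$ parts inside the exactly telescoping weight $\sum_{i=l+1}^n i^{-1}$ you pay only $\ths$ per edge index, whereas Lemma \ref{technical heart lemma} charges $\thd=\th\vee\ths$ there (the term $2m\thd\supnorm{y}$).

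The one defect is the final bookkeeping. Your fluctuation contributions are: interior full blocks at most $\ths/j_*\le 2m\ths/l$, and each of the (at most two) edge blocks at most $m\ths/l$; these total $4m\ths/l$, and $4m\ths/l\le 2m(1+\ths)/l$ only when $\ths\le 1$, which is not assumed (Section 5 allows an arbitrary non-negative sequence $\{\th_i\}$). So your ``adding up'' step asserts an inequality that fails for $\ths>1$, and what your estimates actually yield is $\exp\bigl|\sum_{i=l+1}^n(\th_i-\th)/i\bigr|\le \exp\{4m\ths/l\}\,(n/l)^{\tthnm}\le e^{2\ths}(n/l)^{\tthnm}$. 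This is a constant-level slip rather than a conceptual gap: a bound of this shape, with any constant depending only on $\th$ and $\ths$, is all that is used downstream (where constants like $e^{1+\th^*}$ appear), and indeed a literal application of Lemma \ref{technical heart lemma} as in the paper's own proof produces $2m(\thd+\ths)/l$ rather than the stated $2m(1+\ths)/l$. Still, as written you should either carry the constant honestly (ending with $e^{2\ths}$, or with $e^{\thd+\ths}$ if you bound the edge blocks crudely by $|\th_i-\th|\le\thd$), or note explicitly the condition under which your total fits the stated $2m(1+\ths)/l$ budget.
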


\begin{proof}
Choosing $y_i:=1/i$, the second inequality in Lemma~\ref{technical heart lemma} gives
\eqs
   \biggeuknorm{\sum_{i=l+1}^n \frac{\theta_i - \theta}{i}}
     &\le& 2\frac ml + \tthnm\sum_{i=l+1}^n \frac1i 
       + \ths\sum_{j=\lfloor l/m \rfloor}^{\lfloor n/m \rfloor - 1} \frac1{j(j+1)} \\
     &\le& 2(1+\ths)\frac ml + \tthnm\sum_{i=l+1}^n \frac1i ,
\ens
and the lemma follows.
\end{proof}

\begin{bibdiv}
\begin{biblist}

\ignore{
\bib{abt:00}{article}{
      author={Arratia, R.},
      author={Barbour, A.~D.},
      author={Tavar{\'e}, S.},
       title={Limits of logarithmic combinatorial structures},
        date={2000},
     journal={Ann. Probab.},
      volume={28},
      number={4},
       pages={1620\ndash 1644},
}
}

\bib{abt:03}{book}{
      author={Arratia, R.},
      author={Barbour, A.~D.},
      author={Tavar{\'e}, S.},
       title={Logarithmic combinatorial structures: a probabilistic approach},
      series={EMS Monographs in Mathematics},
   publisher={European Mathematical Society (EMS), Z\"urich},
        date={2003},
}

\bib{abt:05}{article}{
      author={Arratia, R.},
      author={Barbour, A.~D.},
      author={Tavar{\'e}, S.},
       title={A probabilistic approach to analytic arithmetic on algebraic
  function fields},
        date={2005},
     journal={Math. Proc. Cambridge Philos. Soc.},
      volume={139},
      number={1},
       pages={1\ndash 26},
}

\bib{ast:95}{article}{
      author={Arratia, R.},
      author={Stark, D.},
      author={Tavar{\'e}, S.},
       title={Total variation asymptotics for {P}oisson process approximations
  of logarithmic combinatorial assemblies},
        date={1995},
     journal={Ann. Probab.},
      volume={23},
      number={3},
       pages={1347\ndash 1388},
}

\bib{bcl:92}{article}{
      author={Barbour, A.~D.},
      author={Chen, L.~H.~Y.},
      author={Loh, W.-L.},
       title={Compound {P}oisson approximation for nonnegative random variables
  via {S}tein's method},
        date={1992},
     journal={Ann. Probab.},
      volume={20},
      number={4},
       pages={1843\ndash 1866},
}

\ignore{
\bib{barbourgranovsky:05}{article}{
      author={Barbour, A.~D.},
      author={Granovsky, B.~L.},
       title={Random combinatorial structures: the convergent case},
        date={2005},
     journal={J. Combin. Theory Ser. A},
      volume={109},
      number={2},
       pages={203\ndash 220},
}
\bib{bhj:92}{book}{
      author={Barbour, A.~D.},
      author={Holst, L.},
      author={Janson, S.},
       title={Poisson approximation},
      series={Oxford Studies in Probability},
   publisher={The Clarendon Press Oxford University Press},
        date={1992},
}
\bib{barbourxia:99}{article}{
      author={Barbour, A.~D.},
      author={Xia, A.},
       title={Poisson perturbations},
        date={1999},
     journal={ESAIM Probab. Statist.},
      volume={3},
       pages={131\ndash 150 (electronic)},
}
}

\bib{beurling:37}{article}{
      author={Beurling, A.},
       title={Analyse de la loi asymptotique de la distribution des nombres
  premiers g\'{e}n\'{e}ralis\'{e}s},
        date={1937},
     journal={Acta Math.},
      volume={68},
       pages={255\ndash 291},
}

\ignore{
\bib{chen:03}{article}{
      author={Chen, W.-M.},
      author={Hwang, H.-K.},
       title={Analysis in distribution of two randomized algorithms for finding
  the maximum in a broadcast communication model},
        date={2003},
     journal={J. Algorithms},
      volume={46},
      number={2},
       pages={140\ndash 177},
}
}

\bib{chulu:06}{article}{
      author={Chung, F.},
      author={Lu, L.},
       title={Concentration inequalities and martinagle inequalities: a survey},
        date={2006},
     journal={Internet Mathematics},
      volume={3},
       pages={79\ndash 127},
}

\bib{dickman:30}{article}{
      author={Dickman, K.},
       title={On the frequency of numbers containing prime factors of a certain
  relative magnitude},
        date={1930},
     journal={Ark. Math. Astr. Fys.},
      volume={22},
      number={10},
       pages={1\ndash 14},
}

\ignore{
\bib{freiman:02}{article}{
      author={Freiman, G.~A.},
      author={Granovsky, B.~L.},
       title={Asymptotic formula for a partition function of reversible
  coagulation-fragmentation processes},
        date={2002},
     journal={Israel J. Math.},
      volume={130},
       pages={259\ndash 279},
}
\bib{hwang:02}{article}{
      author={Hwang, H.-K.},
      author={Tsai, T.-H.},
       title={Quickselect and the {D}ickman function},
        date={2002},
     journal={Combin. Probab. Comput.},
      volume={11},
      number={4},
       pages={353\ndash 371},
}
}

\bib{kingman:77}{article}{
      author={Kingman, J.~F.~C.},
       title={The population structure associated with the {E}wens sampling
  formula},
        date={1977},
     journal={Theoret. Population Biology},
      volume={11},
      number={2},
       pages={274\ndash 283},
}

\bib{knopfmacher:79}{book}{
      author={Knopfmacher, J.},
       title={Analytic arithmetic of algebraic function fields},
      series={Lecture Notes in Pure and Applied Mathematics},
   publisher={Marcel Dekker Inc.},
     address={New York},
        date={1979},
      volume={50},
}

\ignore{
\bib{knopfmacher:01}{book}{
      author={Knopfmacher, J.},
      author={Zhang, W.-B.},
       title={Number theory arising from finite fields},
      series={Monographs and Textbooks in Pure and Applied Mathematics},
   publisher={Marcel Dekker Inc.},
     address={New York},
        date={2001},
      volume={241},
        note={Analytic and probabilistic theory},
}
}

\bib{kubilius:64}{book}{
      author={Kubilius, J.},
       title={Probabilistic methods in the theory of numbers},
      series={Translations of Mathematical Monographs, Vol. 11},
   publisher={American Mathematical Society},
     address={Providence, R.I.},
        date={1964},
}

\ignore{
\bib{kuipers:74}{book}{
      author={Kuipers, L.},
      author={Niederreiter, H.},
       title={Uniform distribution of sequences},
   publisher={Wiley-Interscience [John Wiley \& Sons]},
     address={New York},
        date={1974},
        note={Pure and Applied Mathematics},
}
}

\bib{lindvall:92}{book}{
      author={Lindvall, T.},
       title={Lectures on the coupling method},
   publisher={Dover Publications Inc.},
     address={Mineola, NY},
        date={2002},
}

\bib{manstavicius:09}{article}{
      author={Manstavi\v cius, E.},
       title={Strong convergence on weakly logarithmic combinatorial assemblies},
        date={2009},
     journal={arXiv:0903.1051},
}

\bib{mattnerroos:07}{article}{
      author={Mattner, L.},
      author={Roos, B.},
       title={A shorter proof of Kanter's Bessel function concentration bound},
        date={2007},
     journal={Prob. Theory Rel. Fields},
      volume={139},
       pages={191\ndash 205},
}

\bib{mineka:73}{article}{
      author={Mineka, J.},
       title={A criterion for tail events for sums of independent random
  variables},
        date={1973},
     journal={Z. Wahrscheinlichkeitstheorie und Verw. Gebiete},
      volume={25},
       pages={163\ndash 170},
}

\ignore{
\bib{nietli3:06}{article}{
      author={Nietlispach, B.},
       title={An analogue of the {K}ubilius main theorem for quasi-logarithmic
  structures},
        date={2007},
        note={Submitted.},
}
\bib{nietli2:07}{article}{
      author={Nietlispach, B.},
       title={Asymptotic density in quasi-logarithmic additive number systems},
        date={2007},
     journal={Math. Proc. Cambridge Philos. Soc.},
        note={To appear.},
}
\bib{penrose:04}{article}{
      author={Penrose, M.~D.},
      author={Wade, A.~R.},
       title={Random minimal directed spanning trees and {D}ickman-type
  distributions},
        date={2004},
     journal={Adv. in Appl. Probab.},
      volume={36},
      number={3},
       pages={691\ndash 714},
}
}

\bib{roesler:76}{article}{
      author={R{\"o}sler, U.},
       title={Das 0-1-{G}esetz der terminalen {$\sigma $}-{A}lgebra bei
  {H}arrisirrfahrten},
        date={1977},
     journal={Z. Wahrscheinlichkeitstheorie und Verw. Gebiete},
      volume={37},
      number={3},
       pages={227\ndash 242},
}

\bib{rogers:99}{article}{
      author={Rogers, L.C.G.},
       title={Fastest coupling of random walks},
        date={1999},
     journal={J. London Math. Soc.},
      volume={60},
       pages={630\ndash 640},
}

\ignore{
\bib{stark:97b}{article}{
      author={Stark, D.},
       title={Total variation asymptotics for independent process
  approximations of logarithmic multisets and selections},
        date={1997},
     journal={Random Structures Algorithms},
      volume={11},
      number={1},
       pages={51\ndash 80},
}
\bib{stark:06}{article}{
      author={Stark, D.},
       title={Logical limit laws for logarithmic structures},
        date={2006},
     journal={Math. Proc. Cambridge Philos. Soc.},
      volume={140},
      number={3},
       pages={537\ndash 544},
}
\bib{tenenbaum:95}{book}{
      author={Tenenbaum, G.},
       title={Introduction to analytic and probabilistic number theory},
      series={Cambridge Studies in Advanced Mathematics},
   publisher={Cambridge University Press},
     address={Cambridge},
        date={1995},
      volume={46},
}
}

\bib{vervaat:72}{book}{
      author={Vervaat, W.},
       title={Success epochs in Bernoulli trials with applications in number theory},
      series={Mathematical Centre Tracts},
   publisher={Mathematisch Centrum},
     address={Amsterdam},
        date={1972},
      volume={42},
}

\bib{zhang:96}{article}{
      author={Zhang, W.-B.},
       title={The prime element theorem in additive arithmetic semigroups.
  {I}},
        date={1996},
     journal={Illinois J. Math.},
      volume={40},
      number={2},
       pages={245\ndash 280},
}

\ignore{
\bib{zhangprob:96}{incollection}{
      author={Zhang, W.-B.},
       title={Probabilistic number theory in additive arithmetic semigroups.
  {I}},
        date={1996},
   booktitle={Analytic number theory, vol.\ 2 (allerton park, il, 1995)},
      series={Progr. Math.},
      volume={139},
   publisher={Birkh\"auser Boston},
     address={Boston, MA},
       pages={839\ndash 885},
}
}

\end{biblist}
\end{bibdiv}

\end{document}